\newtheorem{lemma}{Lemma}
\newtheorem{theorem}{Theorem}
\newtheorem*{theorem*}{Theorem} 
\newtheorem*{corollary*}{Corollary}
\theoremstyle{remark}
\DeclareMathOperator\mat{M}
\DeclareMathOperator\elin{E}
\DeclareMathOperator\eorth{EO}
\DeclareMathOperator\stlin{St}
\DeclareMathOperator\storth{StO}
\DeclareMathOperator\stmap{st}
\DeclareMathOperator\diag{D}
\DeclareMathOperator\glin{GL}
\DeclareMathOperator\orth{O}
\newcommand{\op}{{\mathrm{op}}}
\DeclareMathOperator{\id}{id}
\DeclareMathOperator{\Aut}{Aut}
\DeclareMathOperator{\End}{End}
\newcommand{\up}[2]{{^{#1}\!{#2}}}
\newcommand{\bigperp}{\mathop{\mathpalette\bigp@rp\relax}\displaylimits}
\newcommand{\bigp@rp}[2]{\vcenter{\m@th\hbox{\scalebox{\ifx#1\displaystyle2.1\else1.5\fi}{\(#1\perp\)}}}}
\newcommand{\Set}{\mathbf{Set}}
\newcommand{\Group}{\mathbf{Grp}}
\DeclareMathOperator{\Pro}{Pro}
\title{Another presentation\\ of orthogonal Steinberg groups}
\author{
  Egor Voronetsky
  \thanks{The work was supported by the Theoretical Physics and Mathematics Advancement Foundation ``BASIS'' and by ``Native towns'', a social investment program of PJSC ``Gazprom Neft''.}
  \\
  Chebyshev Laboratory, \\
  St. Petersburg State University, \\
  14th Line V.O., 29B, \\
  Saint Petersburg 199178 Russia \\
}
\begin{document}
\maketitle

\begin{abstract}
We use the pro-group approach to show that \(\storth(M, q)\) admits van der Kallen's ``another presentation'', where \(M\) is a module over a commutative ring with sufficiently isotropic quadratic form \(q\). Moreover, we construct an analog of ESD-transvections in orthogonal Steinberg pro-groups under some assumptions on their parameters.
\end{abstract}

\section{Introduction}

In \cite{AnotherPresentation} W. van der Kallen proved that the linear Steinberg group \(\stlin(n, K)\) over a commutative ring \(K\) is a central extension of the elementary linear group \(\elin(n, K)\) if \(n \geq 4\). More precisely, he showed that the Steinberg group admits a more invariant presentation and actually is a crossed module over the general linear group \(\glin(n, K)\). This was generalized by M. Tulenbaev in \cite{Tulenbaev} for linear groups over almost commutative rings.

For symplectic groups the same result was proved by A. Lavrenov in \cite{CentralityC}. He used essentially the same approach: there is another presentation of the symplectic Steinberg group \(\mathrm{StSp}(2\ell, K)\) over a commutative ring \(K\) for \(\ell \geq 3\) such that it is obvious that this group is a central extension of the elementary symplectic group \(\mathrm{ESp}(2\ell, K)\). Together with S. Sinchuk he also proved centrality of the corresponding \(K_2\)-functors for the Chevalley groups of the types \(\mathsf D_\ell\) for \(\ell \geq 3\) and \(\mathsf E_\ell\) in \cite{CentralityD, CentralityE} using a different method.

Another presentation for orthogonal groups was proved by S. B\"oge in \cite{AnotherPresentationOrth}, but only for fields of characteristic not \(2\). She also considered sufficiently isotropic orthogonal groups instead of the split ones.

In \cite{LinK2} we reproved that \(\stlin(n, K)\) is a crossed module over \(\glin(n, K)\) using pro-groups. This more powerful method allowed to generalize the result for isotropic linear groups over almost commutative rings and for matrix linear groups over non-commutative rings satisfying a local stable rank condition. Together with Lavrenov and Sinchuk we applied the pro-group approach to the simple simply connected Chevalley groups of rank at least \(3\) in \cite{ChevK2}. The same result for isotropic odd unitary groups (including the Chevalley groups of the types \(\mathsf A_\ell\), \(\mathsf B_\ell\), \(\mathsf C_\ell\), and \(\mathsf D_\ell\)) was proved in \cite{UnitK2}, see also \cite{Thesis}. For the Chevalley groups of rank \(2\) there are counterexamples, see \cite{Wendt}.

The pro-group approach does not give any ``another presentation'' of the corresponding Steinberg group by itself. In this paper we prove the following:
\begin{theorem*}
Let \(K\) be a unital commutative ring, \(M\) be a finitely presented \(K\)-module with a quadratic form \(q\) and pairwise orthogonal hyperbolic pairs \((e_{-1}, e_1), \ldots, (e_{-\ell}, e_\ell)\) for \(\ell \geq 3\). Then the orthogonal Steinberg group \(\storth(M, q)\) is isomorphic to the abstract group \(\storth^*(M, q)\) generated by elements \(X^*(u, v)\), where \(u, v \in M\) are vectors, \(u\) lies in the orbit of \(e_1\) under the action of the orthogonal group, and \(u \perp v\). The relations on these elements are
\begin{itemize}
\item \(X^*(u, v + v') = X^*(u, v)\, X^*(u, v')\);
\item \(X^*(u, v)\, X^*(u', v')\, X^*(u, v)^{-1} = X^*(T(u, v)\, u', T(u, v)\, v')\), where \(T(u, v)\) is the corresponding ESD-transvection;
\item \(X^*(u, va) = X^*(v, -ua)\) if \((u, v)\) lies in the orbit of \((e_1, e_2)\);
\item \(X^*(u, ua) = 1\).
\end{itemize}
\end{theorem*}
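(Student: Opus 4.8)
The plan is to construct mutually inverse homomorphisms between \(\storth(M, q)\) and \(\storth^*(M, q)\), realizing both as extensions of the elementary orthogonal group \(\eorth(M, q)\), with almost all the work concentrated in the map out of \(\storth^*(M, q)\). Since the symbol \(X^*(u, v)\) is modeled on the ESD-transvection \(T(u, v)\), I would first record the four standard identities satisfied by these transvections: additivity \(T(u, v + v') = T(u, v)\, T(u, v')\), the conjugation law \(g\, T(u, v)\, g^{-1} = T(gu, gv)\) for orthogonal \(g\), the exchange \(T(u, va) = T(v, -ua)\) for a hyperbolic-type pair \((u, v)\), and degeneracy \(T(u, ua) = \id\). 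These are precisely the four defining relations of \(\storth^*(M, q)\), so sending \(X^*(u, v) \mapsto T(u, v)\) yields a well-defined surjection \(\pi \colon \storth^*(M, q) \to \eorth(M, q)\).

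For the map \(\storth(M, q) \to \storth^*(M, q)\), the Steinberg group is presented by root generators attached to the hyperbolic pairs \((e_{-i}, e_i)\). I would send each such generator to the corresponding symbol \(X^*(e_i, w)\), where \(w\) is the appropriate combination of the \(e_{\pm j}\) and the anisotropic part, and then verify that the Chevalley commutator relations and the Weyl relations hold among these images. Additivity inside a root subgroup is relation~1; the commutator formulas follow by specializing relation~2 to \(u = e_i\), where \(T(e_i, v)\) acts explicitly on the standard vectors; and relation~3 encodes the symmetry between the two long-root positions sharing a hyperbolic pair. This direction is computational but self-contained.

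The hard direction, and the main obstacle, is the map \(\storth^*(M, q) \to \storth(M, q)\). Because \(u\) ranges over the entire orbit of \(e_1\), there is no combinatorial generator to assign directly to \(X^*(u, v)\). The idea is to lift by conjugation: choose \(g \in \storth(M, q)\) whose image in \(\eorth(M, q)\) carries \(e_1\) to \(u\) (here \(g\) acts on \(M\) through the projection), and set \(X^*(u, v) \mapsto g\, X^{\mathrm{st}}(e_1, g^{-1}v)\, g^{-1}\), where \(X^{\mathrm{st}}(e_1, -)\) is the lift of the transvections fixing the standard position constructed in the previous step. One checks readily that \(g^{-1}v \perp e_1\), so this is well-posed for each fixed \(g\); the real content is independence of the choice of \(g\), equivalently that the stabilizer of \(e_1\) acts trivially by conjugation on these lifts. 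This is exactly where the pro-group machinery enters: I would pass to the relative Steinberg pro-group and invoke the crossed-module (centrality) structure for orthogonal groups, valid for \(\ell \geq 3\), to conclude that the relevant kernel is central, forcing the stabilizer action to be trivial. The hypothesis \(\ell \geq 3\) is what provides enough isotropic room to connect any two admissible choices \(g, g'\) by a chain of elementary moves, and finite presentation of \(M\) lets one reduce the verification to a controlled, essentially Noetherian setting.

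Finally, I would check that the two homomorphisms are mutually inverse. The composite \(\storth \to \storth^* \to \storth\) fixes the root generators by construction. For the other composite, unwinding the lift and applying relation~2 recovers \(X^*(u, v)\) from \(g\, X^*(e_1, g^{-1}v)\, g^{-1}\), so \(\storth^* \to \storth \to \storth^*\) is the identity on generators. Both composites are manifestly compatible with the projections onto \(\eorth(M, q)\), which eliminates any residual ambiguity and completes the proof of the isomorphism.
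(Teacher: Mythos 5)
Your skeleton (two homomorphisms, with the difficulty concentrated in the map out of \(\storth^*(M,q)\)) matches the paper's, but the justification you offer at the decisive step is not valid. You set \(X^*(u,v) \mapsto \up{g}{X^{\mathrm{st}}(e_1, g^{-1}v)}\) for a chosen \(g \in \storth(M,q)\) whose image moves \(e_1\) to \(u\), and you claim independence of \(g\) because the crossed-module/centrality structure ``forces the stabilizer action to be trivial.'' Centrality of \(\Ker\stmap\) does give you that conjugation by \(g\) depends only on \(\stmap(g) \in \eorth(M,q)\), but it only reduces well-definedness to the identity \(\up{h}{X^{\mathrm{st}}(e_1,w)} = X^{\mathrm{st}}(e_1, kw)\) for every \(k \in \eorth(M,q)\) stabilizing \(e_1\) and any lift \(h\) of \(k\) --- and this identity does \emph{not} follow from centrality. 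Both sides have the same image \(T(e_1,kw)\) under \(\stmap\), so centrality tells you only that they differ by a central element \(c(k,w)\), additive in \(w\); nothing makes \(c(k,w)\) vanish for free. That equivariance is precisely the hard technical content of the paper: \(X(e_\ell, v^{(\infty)})\) is built explicitly as a product of generators of the Steinberg pro-group, equivariance is proved over local rings by checking it on a generating set of the parabolic stabilizer \(P\) of the line \(e_\ell K_{\mathfrak m}\) (the diagonal subgroup, the unipotent radical, and a smaller orthogonal group, using Petrov's generation theorem and Witt cancellation --- lemmas 3 and 4), and the identity is then globalized via the lemma that \(N^{(\infty,S)}\) is generated by the costalks \(N^{(\infty,\mathfrak m)}\) (lemma 5 and theorem 1). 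If centrality alone sufficed, none of that machinery would be needed.

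There is also a mismatch of orbits: the theorem requires \(u\) to range over \(\orth(M,q)\,e_1\), while your \(g\) lives in \(\storth(M,q)\), whose image reaches only \(\eorth(M,q)\,e_1\). To cover the full orbit one must use the action of \(\orth(M,q)\) on \(\storth(M,q)\) coming from the crossed-module theorem of \cite{UnitK2}, which is how case (3) of theorem 2 in the paper proceeds, rather than conjugation by Steinberg elements. Your endgame, by contrast, is fine in principle but heavier than necessary: instead of verifying both composites on generators, the paper checks only \(G \circ F = \id\), observes that \(\storth^*(M,q)\) is perfect and that \(\Ker G\) is central (it lies in the kernel of \(X^*(u,v) \mapsto T(u,v)\), which is central by the second relation), and concludes that \(G\) is a split perfect central extension, hence an isomorphism with inverse \(F\).
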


Using this variant of ``another presentation'', it is obvious that \(\storth(M, q)\) is a crossed module over \(\orth(M, q)\).

During the proof we also give a general definition of ESD-transvections \(X(u, v) \in \storth(M, q)\), where \(u\) is a member of a hyperbolic pair and \(v \perp u\). These elements lift ordinary ESD-transvections \(T(u, v) \in \orth(M, q)\) and satisfy various identities, but their existence for all such \(u\) is unclear.

The author wants to express his gratitude to Nikolai Vavilov, Sergey Sinchuk and Andrei Lavrenov for motivation and helpful discussions. He also wants to thank Pavel Gvozdevsky for useful comments.

\section{Orthogonal Steinberg pro-groups}

We use the group-theoretical notation \(\up gh = ghg^{-1}\) and \([g, h] = ghg^{-1}h^{-1}\). If a group \(G\) acts on a group \(H\) by automorphisms, we denote the action by \(\up gh\).

Let \(K\) be a commutative unital ring, \(M\) be an \(K\)-module of finite presentation with a quadratic form \(q \colon M \to K\) and the associated symmetric bilinear form \(\langle m, m' \rangle = q(m + m') - q(m) - q(m)\). The orthogonal group \(\orth(M, q)\) consists of linear automorphisms \(g \in \glin(M)\) such that \(q(gm) = q(m)\) for all \(m\). A hyperbolic pair \((u, v)\) in \(M\) is a pair of vectors with the properties \(\langle u, v \rangle = 1\) and \(q(u) = q(v) = 0\).

Consider vectors \(u, v \in M\) such that \(q(u) = \langle u, v \rangle = 0\). In this case the operator
\[T(u, v) \colon M \to M,\, m \mapsto m + u\, \langle v, m \rangle - v\, \langle u, m \rangle - u\, q(v)\, \langle u, m \rangle\]
is called an Eichel -- Siegel -- Dickson transvection (or an ESD-transvection) with parameters \(u\) and \(v\), see \cite{OddPetrov} for details in a more general context. The following lemma summarizes the well-known properties of these operators, all of them may be checked by direct calculations.

\begin{lemma}\label{esd-facts}
Each ESD-transvection lies in the orthogonal group \(\orth(M, q)\). Moreover,
\begin{itemize}
\item \(T(u, v)\, T(u, v') = T(u, v + v')\) for \(q(u) = \langle u, v \rangle = \langle u, v' \rangle = 0\);
\item \(T(ua, v) = T(u, va)\) for \(q(u) = \langle u, v \rangle = 0\) and \(a \in K\);
\item \(\up g{T(u, v)} = T(gu, gv)\) for \(q(u) = \langle u, v \rangle = 0\) and \(g \in \orth(M, q)\);
\item \(T(u, v) = T(v, -u)\) for \(q(u) = q(v) = \langle u, v \rangle = 0\);
\item \(T(u, ua) = 1\) for \(q(u) = 0\) and \(a \in K\).
\end{itemize}
\end{lemma}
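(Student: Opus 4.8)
The plan is to verify every assertion by direct computation from the explicit formula for \(T(u,v)\), relying only on the elementary identities of the quadratic form: the polarization rule \(q(x+y) = q(x) + q(y) + \langle x, y\rangle\), the homogeneity \(q(xa) = a^2 q(x)\) and \(\langle xa, y\rangle = a\langle x,y\rangle\), the symmetry \(\langle x,y\rangle = \langle y,x\rangle\), and the consequence \(\langle x,x\rangle = 2q(x)\). I would record these first, since the standing hypothesis \(q(u)=0\) then forces \(\langle u,u\rangle = 0\), which is the mechanism behind almost every cancellation below.

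For the claim that \(T(u,v)\in\orth(M,q)\), I would first observe that \(T(u,v)\) is visibly \(K\)-linear, and then check \(q(T(u,v)m) = q(m)\) for all \(m\). Writing \(T(u,v)m = m + p\) with correction term \(p = u\bigl(\langle v,m\rangle - q(v)\langle u,m\rangle\bigr) - v\langle u,m\rangle\), I expand \(q(m+p) = q(m) + q(p) + \langle m,p\rangle\). Using \(q(u)=\langle u,v\rangle=0\), the term \(q(p)\) collapses to \(q(v)\langle u,m\rangle^2\), while \(\langle m,p\rangle\) collapses to \(-q(v)\langle u,m\rangle^2\), so the two cancel and \(q\) is preserved. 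Invertibility — hence membership in \(\orth(M,q)\) rather than merely in the orthogonal monoid — follows formally once the additivity law below is in hand, since it yields \(T(u,v)\,T(u,-v) = T(u,0) = \id\).

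The additivity law \(T(u,v)\,T(u,v') = T(u,v+v')\) is the one identity requiring genuine bookkeeping. Setting \(m' = T(u,v')m\), the key intermediate simplifications are \(\langle u,m'\rangle = \langle u,m\rangle\) (which uses \(\langle u,u\rangle = 0\) and \(\langle u,v'\rangle=0\)) and \(\langle v,m'\rangle = \langle v,m\rangle - \langle v,v'\rangle\langle u,m\rangle\). Substituting these into \(T(u,v)m'\) and collecting the \(u\)-, \(v\)-, and \(v'\)-components, the coefficient of \(u\) assembles into \(\langle v+v',m\rangle - q(v+v')\langle u,m\rangle\) precisely because \(q(v)+q(v')+\langle v,v'\rangle = q(v+v')\), while the remaining terms match the definition of \(T(u,v+v')m\).

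The four remaining identities are immediate substitutions. For \(T(ua,v)=T(u,va)\), pulling the scalar through \(\langle ua,m\rangle = a\langle u,m\rangle\) and \(q(va)=a^2q(v)\) makes both sides agree term by term. For the equivariance \(\up g{T(u,v)} = T(gu,gv)\), I would compute \(g\,T(u,v)\,g^{-1}m\) and use \(\langle gx,m\rangle = \langle x,g^{-1}m\rangle\) together with \(q(gv)=q(v)\); this simultaneously confirms that \(T(gu,gv)\) is defined, since \(g\) preserves \(q(u)\) and \(\langle u,v\rangle\). For \(T(u,v)=T(v,-u)\) under the extra hypothesis \(q(v)=0\), both correction terms reduce to \(u\langle v,m\rangle - v\langle u,m\rangle\). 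Finally \(T(u,ua)=\id\) holds because the two surviving \(u\)-terms cancel while the \(q(ua)\langle u,m\rangle\) term vanishes. The only real obstacle is avoiding sign and indexing slips in the quadratic-form expansions of the first two parts; every cancellation is ultimately forced by \(q(u)=0\) and \(\langle u,v\rangle = 0\).
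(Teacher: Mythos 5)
Your proposal is correct and matches the paper's approach: the paper simply asserts that all the identities ``may be checked by direct calculations,'' and your computations carry out exactly those verifications, with the key cancellations (forced by \(q(u)=0\), hence \(\langle u,u\rangle=0\), and \(\langle u,v\rangle=0\)) handled correctly, including the use of \(q(v)+q(v')+\langle v,v'\rangle=q(v+v')\) in the additivity law and the deduction of invertibility from \(T(u,v)\,T(u,-v)=T(u,0)=\id\).
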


From now on suppose that there are pairwise orthogonal hyperbolic pairs \((e_{-1}, e_1), \ldots, (e_{-\ell}, e_\ell)\) in \(M\) for \(\ell \geq 3\). We denote the orthogonal complement of the span of these vectors by \(M_0\).

Recall that elementary orthogonal transvections of long root type are the elements \(t_{ij}(a) = T(e_i, e_{-j} a) = T(e_{-j}, -e_i a)\) for \(i \neq \pm j\) and \(a \in K\), the elementary orthogonal transvections of short root type are the elements \(t_j(m) = T(e_{-j}, -m)\) for \(m \in M_0\). These elements are precisely the elementary transvections in \(\orth(M, q)\) considered as an isotropic odd unitary group, but with slightly different parametrizations. See \cite{OddPetrov} or \cite{StabVor} for details. Clearly,
\[T(e_i, m) = t_{-i}(-m_0)\, \prod_{j \neq \pm i} t_{i, -j}(m_j)\]
if \(\langle e_i, m \rangle = 0\), where \(m = m_0 + \sum_{1 \leq |j| \leq \ell} e_j m_j\) and \(m_0 \in M_0\).

The orthogonal Steinberg group \(\storth(M, q)\) is the abstract group generated by the elements \(x_{ij}(a)\) and \(x_j(m)\) for \(a \in K\), \(m \in M_0\), and \(i \neq \pm j\). The relations on these elements are the following:
\begin{itemize}
\item \(x_{ij}(a + b) = x_{ij}(a)\, x_{ij}(b)\);
\item \(x_{ij}(a) = x_{-j, -i}(-a)\);
\item \(x_i(m + m') = x_i(m)\, x_i(m')\);
\item \([x_{ij}(a), x_{kl}(b)] = 1\) for \(j \neq k \neq -i \neq -l \neq j\);
\item \([x_{ij}(a), x_{j, -i}(b)] = 1\);
\item \([x_{ij}(a), x_{jk}(b)] = x_{ik}(ab)\) for \(i \neq \pm k\);
\item \([x_i(m), x_j(m')] = x_{-i, j}(-\langle m, m' \rangle)\) for \(i \neq \pm j\);
\item \([x_i(m), x_{jk}(a)] = 1\) for \(j \neq i \neq -k\);
\item \([x_i(m), x_{ij}(a)] = x_{-i, j}(-q(m)\, a)\, x_j(ma)\).
\end{itemize}

There is a group homomorphism \(\stmap \colon \storth(M, q) \to \orth(M, q), x_{ij}(a) \mapsto t_{ij}(a), x_j(m) \mapsto t_j(m)\). Its image is the elementary orthogonal group \(\eorth(M, q)\), i.e. the subgroup of \(\orth(M, q)\) generated by the elementary orthogonal transvections.

We would like to find analogs of ESD-transvections in the Steinberg group \(\storth(M, q)\). In order to do so, we use results from \cite{UnitK2} or \cite{Thesis} (if \(q\) is split of even rank, then it suffices to use the case of \(\mathsf D_\ell\) considered in \cite{ChevK2}). From now on we assume that the reader is familiar with the notion of categorical pro-completion. Recall that there is a ``forgetful'' functor from the category of pro-groups \(\Pro(\Group)\) to the category of group objects in the category of pro-sets \(\Pro(\Set)\). This functor is fully faithful, so we identify pro-groups with the corresponding pro-sets. The projective limits in \(\Pro(\Set)\) are denoted by \(\varprojlim^{\Pro}\) and various pro-sets are labeled with the upper index \((\infty)\), such as \(X^{(\infty)}\). Both categories \(\Pro(\Group)\) and \(\Pro(\Set)\) are complete and cocomplete.

We also use the following convention from \cite{ChevK2, LinK2, UnitK2}: if a morphism between pro-sets is given by a first order term (possibly multi-sorted), then we add the upper index \((\infty)\) for the formal variables. For example, \([g^{(\infty)}, h^{(\infty)}]\) denotes the commutator morphism \(G^{(\infty)} \times G^{(\infty)} \to G^{(\infty)}\) for a pro-group \(G^{(\infty)}\), and \(m^{(\infty)} a^{(\infty)}\) denotes the product morphism \(M^{(\infty)} \times K^{(\infty)} \to M^{(\infty)}\) for a pro-ring \(K^{(\infty)}\) and its pro-module \(M^{(\infty)}\). The domains of such variables are usually clear from the context.

For any \(s \in K\) an \(s\)-homotope of \(K\) is the commutative non-unital \(K\)-algebra \(K^{(s)} = \{a^{(s)} \mid a \in K\}\). The operations are given by
\[
a^{(s)} + b^{(s)} = (a + b)^{(s)}, \quad
a^{(s)} b = (ab)^{(s)}, \quad
a^{(s)} b^{(s)} = (abs)^{(s)}.
\]
Similarly, \(M^{(s)} = \{m^{(s)} \mid m \in M\}\) is a non-unital \(K^{(s)}\)-module and a unital \(K\)-module with the operations
\[
m^{(s)} + {m'}^{(s)} = (m + m')^{(s)}, \quad
m^{(s)} a = (ma)^{(s)}, \quad
m^{(s)} a^{(s)} = (mas)^{(s)}.
\]
The forms on \(M\) may be extended to \(M^{(s)}\) by
\[
q\bigl(m^{(s)}\bigr) = (q(m)\, s)^{(s)}, \quad
\bigl\langle m^{(s)}, {m'}^{(s)} \bigr\rangle = (\langle m, m' \rangle\, s)^{(s)}.
\]

Note that in the definition of \(\storth(M, q)\) we do not need the unit of \(K\). Hence we define \(\storth^{(s)}(M, q) = \storth(M^{(s)}, q)\) by the same generators and relations, but the parameters are taken from \(K^{(s)}\) and \(M_0^{(s)}\). If \(s, s' \in K\), then there are homomorphisms
\begin{align*}
K^{(ss')} \to K^{(s)}, a^{(ss')} \mapsto (as')^{(s)},\\
M^{(ss')} \to M^{(s)}, m^{(ss')} \mapsto (ms')^{(s)},
\end{align*}
and the obvious homomorphism \(\storth^{(ss')}(M, q) \to \storth^{(s)}(M, q)\) of Steinberg groups.

Fix a multiplicative subset \(S \subseteq K\). The formal projective limit \(K^{(\infty, S)} = \varprojlim^{\Pro}_{s \in S} K^{(s)}\) is actually a commutative non-unital pro-\(K\)-algebra, \(M^{(\infty, S)} = \varprojlim^{\Pro}_{s \in S} M^{(s)}\) is a pro-\(K\)-module. Similarly, the Steinberg pro-group \(\storth^{(\infty, S)}(M, q) = \varprojlim^{\Pro}_{s \in S} \storth^{(s)}(M, q)\) is indeed a pro-group. If \(S = K \setminus \mathfrak p\) for a prime ideal \(\mathfrak p\), then we write \(K^{(\infty, \mathfrak p)}\), \(M^{(\infty, \mathfrak p)}\), and \(\storth^{(\infty, \mathfrak p)}(M, q)\) instead of \(K^{(\infty, S)}\), \(M^{(\infty, S)}\), and \(\storth^{(\infty, S)}(M, q)\). Similarly, if \(S = \{1, f, f^2, \ldots\}\) for \(f \in K\), then we write \(f\) instead of \(S\) in the indices. Finally, the constructions \(K^{(\infty, S)}\), \(M^{(\infty, S)}\), and \(\storth^{(\infty, S)}(M, q)\) are contravariant functors on \(S\), in the case \(S = \{1\}\) we get \(K\), \(M\), and \(\storth(M, q)\) up to canonical isomorphisms. There are well-defined morphisms \(x_{ij} \colon K^{(\infty, S)} \to \storth^{(\infty, S)}(M, q)\) and \(x_j \colon M_0^{(\infty, S)} \to \storth^{(\infty, S)}(M, q)\) of pro-groups, they generate the Steinberg pro-group in the categorical sense by \cite[lemma 5]{UnitK2} or \cite[beginning of section 3.3]{Thesis}.

In order to apply the main results from \cite{UnitK2} or \cite{Thesis}, we need a lemma.

\begin{lemma}\label{quasi-finite}
Let \((R, \Delta)\) be the odd form \(K\)-algebra constructed by \((M, q)\) as in \cite{StabVor} or \cite[section 1.1]{Thesis}. Then \(R\) is locally finite, i.e. any finite subset of \(R\) is contained in a finite subalgebra. The construction of \((R, \Delta)\) commutes with localizations.
\end{lemma}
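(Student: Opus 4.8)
The plan is to reduce both assertions to the Noetherian case by a standard descent argument, using finite presentation of $M$ throughout. Recall that the construction of \cite{StabVor} produces $R = \End_K(M)$ with involution given by the adjoint with respect to $\langle\,{,}\,\rangle$, while the odd form parameter $\Delta$ is built from the same $\operatorname{Hom}$ and tensor data and encodes $q$; the point of the proof is that all of this structure depends only on finitely much data over $K$.

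For local finiteness, I would write $K = \varinjlim_\alpha K_\alpha$ as the filtered union of its finitely generated, hence Noetherian, subrings. Fix a finite presentation $K^m \to K^n \to M \to 0$ and a finite subset $F \subseteq R$, which I first enlarge so that it is stable under the involution and contains the finitely many elements of $\Delta$ under consideration. Each $f \in F$ is represented by a matrix over $K$ compatible with the presentation, and $q$ is determined by the finitely many values $q(m_i)$ and $\langle m_i, m_j \rangle$ on the chosen generators; hence there is a single Noetherian subring $K_0 = K_\alpha$ over which the presentation matrix, the form, and all of $F$ are defined. Over $K_0$ the module $\tilde M := \operatorname{coker}(K_0^m \to K_0^n)$ is finitely generated, so evaluation on generators embeds $\End_{K_0}(\tilde M)$ into $\tilde M^n$ and shows it is a finite $K_0$-module; consequently the odd form $K_0$-subalgebra $(\tilde R, \tilde\Delta)$ generated by the descended set $\tilde F$ is module-finite over $K_0$. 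Base change along $K_0 \hookrightarrow K$ carries $(\tilde R, \tilde\Delta)$ into $(R, \Delta)$ and recovers $F$, and the $K$-subalgebra $K \cdot \tilde R$ is then finite as a $K$-module and contains $F$. This exhibits $R$ as the filtered union of its finite odd form subalgebras.

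For compatibility with localization, the same finite presentation of $M$ gives the canonical isomorphism $S^{-1}\End_K(M) \cong \End_{S^{-1}K}(S^{-1}M)$ for every multiplicative subset $S \subseteq K$; since the adjoint involution and the odd form parameter $\Delta$ are assembled from the same $\operatorname{Hom}$ and tensor constructions, they localize in the same way. Therefore the odd form algebra built from $(S^{-1}M, q)$ over $S^{-1}K$ is canonically $(S^{-1}R, S^{-1}\Delta)$, which is exactly the assertion that the construction commutes with localizations; in particular it is compatible with the homotopes $K^{(s)}$ and with the localizations at primes used afterwards.

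I expect the genuine work to lie not in the ring $R$ but in the odd form parameter $\Delta$, where the quadratic refinement of $\langle\,{,}\,\rangle$ is recorded: one must check that $\Delta$ is closed under the descent, that it generates module-finite subobjects, and that it localizes correctly. The purely ring-theoretic ingredients — that a commutative ring is the filtered union of Noetherian subrings, that $\End$ of a finitely generated module over a Noetherian ring is module-finite, and that $\operatorname{Hom}$ out of a finitely presented module commutes with localization — are standard, so the substance is in verifying that every piece of the odd form structure on $(R, \Delta)$ respects these reductions.
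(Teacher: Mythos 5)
Your treatment of local finiteness of \(R\) is correct but takes a genuinely different route from the paper. The paper's argument is a two-line reduction: every endomorphism of \(M = K^n/N\) lifts to a matrix preserving \(N\) (projectivity of \(K^n\)), so \(\End_K(M)\) is a factor-algebra of the subalgebra \(\{x \in \mat(n,K) \mid xN \leq N\}\) of \(\mat(n,K)\), and \(R\) is by definition a subalgebra of \(\End_K(M)^\op \times \End_K(M)\); local finiteness passes through subalgebras and quotients. Your Noetherian descent — descend the presentation matrix, the Gram data of \(q\), and matrix lifts of a finite set \(F\) to a finitely generated subring \(K_0\), observe that \(\End_{K_0}(\tilde M)\) embeds into \(\tilde M^n\) and is therefore module-finite over the Noetherian ring \(K_0\), then base-change the subalgebra generated by the descended data back up to \(K\) — is a valid, essentially self-contained substitute; in fact it proves directly the nontrivial fact on which the paper's shortcut implicitly rests, namely that finitely generated subalgebras of (quotients of subalgebras of) \(\mat(n,K)\) are module-finite over an arbitrary commutative \(K\). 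Two small corrections: in \cite{StabVor} the algebra \(R\) is not \(\End_K(M)\) with an involution — the form may be degenerate — but the algebra of pairs \(\{(x^\op, y) \mid \langle xm, m'\rangle = \langle m, ym'\rangle \text{ for all } m, m'\}\), so ``stable under the involution'' should be read as ``descend both components of each pair together with the finitely many adjointness equations on generators''; and note that the paper gets local finiteness from finite generation of \(M\) alone, whereas your descent leans on the finite presentation.

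The genuine gap is the one you flag yourself but never close: the form parameter \(\Delta\). The lemma claims that the construction of \((R, \Delta)\), not just of \(R\), commutes with localization, and ``assembled from the same Hom and tensor constructions'' is not an argument — \(\Delta\) is neither a Hom nor a tensor construction. In the paper it is the explicit set \(\Delta = \{(x^\op, y; z^\op, w) \in R \times R \mid xy + z + w = 0,\ q(ym) + \langle m, wm \rangle = 0 \text{ for all } m \in M\}\), and its localization is, by the definition in \cite{UnitK2}, \(S^{-1}\Delta = \{(\frac rs, \frac{r'}{s^2}) \mid (r, r') \in \Delta\}\), with the asymmetric weights \(\frac 1s\) and \(\frac 1{s^2}\) reflecting the quadratic nature of the second component — a definition you could not have guessed, but whose verification is exactly the missing content. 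The key observation the paper uses, and your sketch lacks, is that the condition quantified over all \(m \in M\) can be checked on a finite generating set: given the equation \(xy + z + w = 0\) and the adjointness identities, the function \(m \mapsto q(ym) + \langle m, wm \rangle\) is additive (its polarization is \(\langle (xy + z + w)m, m' \rangle = 0\)) and scales by \(a^2\), so it vanishes on \(M\) once it vanishes on generators. Membership in \(\Delta\) is thus a finite conjunction of equations, and this is what makes clearing denominators (hence commutation with localization, and likewise your descent to \(K_0\)) go through; without this point the second assertion of the lemma remains unproved in your write-up.
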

\begin{proof}
Here we use that \(M\) is of finite presentation. Since \(M\) is of finite type, the \(K\)-algebra \(\End_K(M)\) is locally finite as a factor-algebra of the subalgebra \(\{x \in \mat(n, K) \mid xN \leq N\}\) of \(\mat(n, K)\), where \(0 \to N \to K^n \to M \to 0\) is a short exact sequence. Then \(R\) is also locally finite, by definition it is the algebra
\[\{(x^\op, y) \in \End_K(M)^\op \times \End_K(M) \mid \langle xm, m' \rangle = \langle m, ym' \rangle \text{ for all } m, m' \in M\}.\]

Now recall that \(M\) is of finite presentation. This implies that the construction  \(\End_K(M)\) commutes with localizations, i.e. the map \(S^{-1} \End_K(M) \to \End_{S^{-1} K}(S^{-1} M)\) is an isomorphism for all multiplicative subsets \(S \subseteq K\). Hence the construction of \(R\) also commutes with localizations. Finally,
\[\Delta = \{(x^\op, y; z^\op, w) \in R \times R \mid xy + z + w = 0, q(ym) + \langle m, wm \rangle = 0 \text{ for all } m \in M\}.\]
Its localization with respect to \(S \subseteq K\) is
\[\textstyle S^{-1} \Delta = \bigl\{\bigl(\frac rs, \frac{r'}{s^2}\bigr) \mathbin{\big|} (r, r') \in \Delta\bigr\} \subseteq S^{-1} R \times S^{-1} R\]
by definition from \cite{UnitK2} or \cite[section 2.2]{Thesis}, so the construction of \(\Delta\) also commutes with localizations. Note that in the definition of \(\Delta\) is suffices to take \(m\) from a generating set of \(M\).
\end{proof}

Now take a multiplicative subset \(S \subseteq K\). Each \(\frac b s \in S^{-1} K\) gives endomorphisms of pro-groups \(K^{(\infty, S)}\) and \(M^{(\infty, S)}\) by \(a^{(ss')} \mapsto (ab)^{(s')}\) and \(m^{(ss')} \mapsto (mb)^{(s')}\). Similarly, each \(\frac{m'}{s} \in S^{-1} M\) gives the morphisms \(K^{(\infty, S)} \to M^{(\infty, S)}, a^{(ss')} \mapsto (m' a)^{(s')}\) and \(M^{(\infty, S)} \to K^{(\infty, S)}, m^{(ss')} \mapsto \langle m, m' \rangle^{(s')}\). These morphisms are denoted by the terms \(a^{(\infty)} \frac b s = \frac b s a^{(\infty)}\), \(m^{(\infty)} \frac b s\), \(\frac{m'}{s} a^{(\infty)}\), and \(\bigl\langle m^{(\infty)}, \frac{m'}{s} \bigr\rangle = \bigl\langle \frac{m'}{s}, m^{(\infty)} \bigr\rangle\) with the free variables \(a^{(\infty)}\) and \(m^{(\infty)}\). By \cite[beginning of section 10]{UnitK2} or \cite[lemma 26]{Thesis}, the local Steinberg group \(\storth(S^{-1} M, q)\) acts on the corresponding Steinberg pro-group \(\storth^{(\infty, S)}(M, q)\) by automorphisms. The action is given by the obvious formulas on generators when there is a corresponding Steinberg relation:
\begin{itemize}
\item \(\up{x_{ij}(a / s)}{x_{kl}\bigl(b^{(\infty)}\bigr)} = x_{kl}\bigl(b^{(\infty)}\bigr)\) for \(i \neq l \neq -j \neq -k \neq i\);
\item \(\up{x_{ij}(a / s)}{x_{j, -i}\bigl(b^{(\infty)}\bigr)} = x_{j, -i}\bigl(b^{(\infty)}\bigr)\);
\item \(\up{x_{ij}(a / s)}{x_{jk}\bigl(b^{(\infty)}\bigr)} = x_{ik}\bigl(\frac a s b^{(\infty)}\bigr)\, x_{jk}\bigl(b^{(\infty)}\bigr)\) for \(i \neq \pm k\);
\item \(\up{x_{ij}(a / s)}{x_k\bigl(m^{(\infty)}\bigr)} = x_k\bigl(m^{(\infty)}\bigr)\) for \(i \neq k \neq -j\);
\item \(\up{x_{ij}(a / s)}{x_i\bigl(m^{(\infty)}\bigr)} = x_{-i, j}\bigl(q\bigl(m^{(\infty)}\bigr)\, \frac a s\bigr)\, x_j\bigl(-m^{(\infty)} \frac a s\bigr)\, x_i\bigl(m^{(\infty)}\bigr)\);
\item \(\up{x_i(m / s)}{x_j\bigl({m'}^{(\infty)}\bigr)} = x_{-i, j}\bigl(-\bigl\langle \frac m s, {m'}^{(\infty)} \bigr\rangle\bigr)\, x_j\bigl({m'}^{(\infty)}\bigr)\) for \(i \neq \pm j\);
\item \(\up{x_i(m / s)}{x_{jk}\bigl(a^{(\infty)}\bigr)} = x_{jk}\bigl(a^{(\infty)}\bigr)\) for \(j \neq i \neq -k\);
\item \(\up{x_i(m / s)}{x_{ij}\bigl(a^{(\infty)}\bigr)} = x_{-i, j}\bigl(-q\bigl(\frac m s\bigr)\, a^{(\infty)}\bigr)\, x_j\bigl(\frac m s a^{(\infty)}\bigr)\, x_{ij}\bigl(a^{(\infty)}\bigr)\).
\end{itemize}
Note that it is not a morphism \(\storth(S^{-1} M, q) \times \storth^{(\infty, S)}(M, q) \to \storth^{(\infty, S)}(M, q)\) of pro-sets, but a homomorphism \(\storth(S^{-1} M, q) \to \Aut_{\Pro(\Group)}\bigl(\storth^{(\infty, S)}(M, q)\bigr)\) of abstract groups.

This action is extranatural on \(S\). More precisely, if \(S \subseteq S'\) are two multiplicative subsets, \(g \in \storth(S^{-1} M, q)\) and \(g'\) is its image in \(\storth({S'}^{-1} M, q)\), then the following diagram with canonical vertical arrows commutes:
\[\xymatrix@R=30pt@C=120pt@!0{
\storth^{(\infty, S')}(M, q) \ar[r]^{\up {g'}{(-)}} \ar[d] & \storth^{(\infty, S')}(M, q) \ar[d]\\
\storth^{(\infty, S)}(M, q) \ar[r]^{\up g{(-)}} & \storth^{(\infty, S)}(M, q).
}\]

By lemma \ref{quasi-finite} and \cite[theorem 2]{UnitK2} (or \cite[lemma 26]{Thesis}), for any prime ideal \(\mathfrak p\) of \(K\) the action of \(\storth(M_{\mathfrak p}, q)\) on \(\storth^{(\infty, \mathfrak p)}(M, q)\) continues to an action of \(\orth(M_{\mathfrak p}, q)\). Also there is an action of \(\orth(M, q)\) on \(\storth(M, q)\) such that \(\stmap \colon \storth(M, q) \to \orth(M, q)\) is a crossed module, see \cite[theorem 3]{UnitK2} or \cite[theorem 7]{Thesis}. This action is also extranatural: if \(g \in \orth(M, q)\), then the induced automorphisms of \(\storth(M, q)\) and \(\storth^{(\infty, \mathfrak p)}(M, q)\) give a commutative square as above.

\section{Another presentation}

Recall the following well-known result.

\begin{lemma}\label{local-isotropy}
Suppose that \(K\) is local. Then the group \(\orth(M, q)\) acts transitively on the set of members of hyperbolic pairs and on the set of pairs \((u, v)\), where \(u, v\) are members of orthogonal hyperbolic pairs.
\end{lemma}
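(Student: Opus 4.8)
The plan is to prove transitivity over a local ring by a direct construction, exhibiting explicit orthogonal transformations (built from ESD-transvections) that move a given member of a hyperbolic pair to the standard vector $e_1$, and then handle pairs.

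First I would treat the single-vector statement. Let $u$ be a member of a hyperbolic pair, so there exists $u'$ with $q(u) = q(u') = 0$ and $\langle u, u' \rangle = 1$ (over a local ring one may rescale the dual vector to make the pairing a unit, then invert it). The goal is to find $g \in \orth(M, q)$ with $g u = e_1$. My strategy here is to apply ESD-transvections to kill, one coordinate at a time, the components of $u$ that are not along $e_1$, using that $\orth(M,q)$ acts on $M$ and that a transvection $T(w, w')$ with suitable $w, w'$ shifts $u$ by $u \langle w', u \rangle - w \langle u', u \rangle - \dots$. Since $(u, u')$ already behaves like a hyperbolic pair, the plan is to first move $u'$ toward $e_{-1}$ by a transvection fixing $u$, reducing to the case where $u, e_1$ span a standard hyperbolic plane together with their duals; then a single transvection of the form $T(e_{-1}, u - e_1)$ or a product of two such transvections carries $u$ to $e_1$. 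The locality of $K$ is what guarantees the relevant scalars ($\langle u, u' \rangle$ and its relatives) are units and thus invertible, so the construction goes through.

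For the pairs statement, suppose $(u, v)$ and $(u'', v'')$ are each pairs of members of orthogonal hyperbolic pairs. Using the first part I would first find $g$ with $g u = e_1$, reducing to the case $u = e_1$; the remaining task is to move $v$ (a member of a hyperbolic pair orthogonal to $e_1$) to the standard $e_2$ while fixing $e_1$. This is exactly the single-vector problem carried out inside the orthogonal complement of the hyperbolic plane $\langle e_{-1}, e_1 \rangle$, which is again a module with a sufficiently isotropic form (it still contains $(e_{-2}, e_2), \ldots, (e_{-\ell}, e_\ell)$, and $\ell \geq 3$ leaves at least two such pairs). The stabilizer of $e_1$ in $\orth(M, q)$ surjects onto the orthogonal group of this complement via transvections $T(e_1, \cdot)$ and $T(e_{-1}, \cdot)$ that fix $e_1$, so applying the single-vector case there yields an element fixing $e_1$ and sending $v$ to $e_2$. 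Composing the transformations for $(u,v)$ and for $(u'', v'')$ and inverting one of them gives the desired element of $\orth(M, q)$.

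The main obstacle I anticipate is the bookkeeping in the single-vector reduction: writing $u = u_0 + \sum_j e_j u_j$ and checking that the ESD-transvections chosen to clear the coordinates $u_j$ and $u_0$ genuinely compose to send $u$ to $e_1$ requires that at each stage the relevant pairing is a unit, and one must verify that $q(u) = 0$ forces the leftover scalar along $e_{-1}$ to be correct. Over a local ring one uses that if $u$ is a member of a hyperbolic pair then at least one of its coordinates (with respect to the available hyperbolic bases) is a unit, so that after an orthogonal change one may assume $u_1$ is a unit and proceed; the delicate point is organizing this so that the form conditions $q(u) = 0$ and $\langle u, u' \rangle = 1$ are preserved throughout. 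Since the statement is described as well-known, I expect the argument to reduce to a standard Witt-type transitivity computation, with locality supplying the needed invertibility of scalars.
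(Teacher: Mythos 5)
Your route is genuinely different from the paper's: the paper disposes of this lemma in one sentence by invoking Witt's cancellation theorem over local rings (any isometry between two embedded hyperbolic planes, or pairs of orthogonal planes, extends to all of \(M\) because the orthogonal complements are isometric by cancellation). Your constructive ESD-transvection argument would even yield the stronger conclusion that \(\eorth(M, q)\) acts transitively, but as written it has a genuine gap at its crux. The claim ``if \(u\) is a member of a hyperbolic pair then at least one of its coordinates with respect to the available hyperbolic bases is a unit'' is false in this generality: the form is only assumed to contain the hyperbolic pairs \((e_{-i}, e_i)\), and the complement \(M_0\) is an arbitrary finitely presented quadratic module which may itself contain hyperbolic pairs. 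For instance, if \(M\) is an orthogonal sum of four hyperbolic planes and \(\ell = 3\), then \(M_0\) is the fourth plane, and a basis vector \(u \in M_0\) of that plane is a member of a hyperbolic pair all of whose coordinates along \(e_{\pm 1}, \dots, e_{\pm 3}\) vanish, so your reduction never gets started on such a vector. The missing idea is a preliminary transvection: write \(\langle u, z \rangle = 1\) and decompose \(z = \sum_j e_j z_j + z_0\) with \(z_0 \in M_0\); by locality either some \(\langle u, e_j \rangle\) is a unit (your good case, after a signed permutation of the hyperbolic basis), or \(\langle u, z_0 \rangle\) is a unit, and in the latter case
\[\bigl\langle e_{-1}, T(e_1, z_0)\, u \bigr\rangle = \langle e_{-1}, u \rangle + \langle z_0, u \rangle - q(z_0)\, \langle e_1, u \rangle\]
is a unit, so \(T(e_1, z_0)\) creates the unit coordinate you need. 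With that step inserted, your case analysis does go through; in particular your guess \(T(e_{-1}, u - e_1)\) is exactly right once one normalizes \(\langle e_{-1}, u \rangle = 1\).

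The pairs part has a second gap. After arranging \(gu = e_1\), the vector \(gv\) is orthogonal to \(e_1\) but in general \emph{not} to \(e_{-1}\): the hyperbolic partner \(gu'\) of \(e_1\) need not be \(e_{-1}\), so \((gv, gv')\) does not yet lie in \((e_{-1} K \oplus e_1 K)^\perp\) and the ``single-vector problem inside the orthogonal complement'' is not yet posed. You must first normalize the partner, i.e., establish transitivity on hyperbolic \emph{pairs} rather than on their members: the transvection \(T(e_1, gu' - e_{-1})\) fixes \(e_1\), carries \(gu'\) to \(e_{-1}\), and, being an isometry, moves \(gv, gv'\) into the complement, after which your reduction applies. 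Relatedly, your assertion that transvections \(T(e_{-1}, \cdot)\) fix \(e_1\) is wrong: \(T(e_{-1}, w)\, e_1 = e_1 - w - e_{-1} q(w)\) for \(w \perp e_1, e_{-1}\). Fortunately, the surjectivity claim about the stabilizer of \(e_1\) is not needed at all; it suffices that every isometry of the complement extends to \(M\) by the identity on \(e_{\pm 1}\), which is immediate.
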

\begin{proof}
Actually, the orthogonal group acts transitively on the set of hyperbolic pairs and on the set of couples of orthogonal hyperbolic pairs. This is Witt's cancellation theorem, see \cite[corollary 8.3]{ASR} for a proof in our generality.
\end{proof}

Let \(S \subseteq K\) be a multiplicative subset and \(u \in S^{-1} M\) be a member of a hyperbolic pair. There is a well-defined split epimorphism \(\langle u, v^{(\infty)} \rangle \colon M^{(\infty, S)} \to K^{(\infty, S)}\) of pro-groups, i.e. its kernel \(M_u^{(\infty, S)}\) has a direct complement isomorphic to \(K^{(\infty, S)}\). If \(g \in \orth(S^{-1} M, q)\), then it naturally acts on \(M^{(\infty, S)}\) (since \(M\) is of finite presentation) and maps the direct summand \(M_u^{(\infty, S)}\) to \(M_{gu}^{(\infty, S)}\). Our goal is to construct morphisms \(X\bigl(u, v^{(\infty)}\bigr) \colon M_u^{(\infty, S)} \to \storth^{(\infty, S)}(M, q)\) of pro-groups satisfying various natural conditions.

\begin{lemma}\label{esd-local}
Let \(\mathfrak p\) be a prime ideal of \(K\). Then for all members \(u\) of hyperbolic pairs in \(M_{\mathfrak p}\) there are unique morphisms \(X\bigl(u, v^{(\infty)}\bigr) \colon M_u^{(\infty, \mathfrak p)} \to \storth^{(\infty, \mathfrak p)}(M, q)\) of pro-groups such that
\begin{enumerate}
\item \(\up g{X\bigl(u, v^{(\infty)}\bigr)} = X\bigl(gu, gv^{(\infty)}\bigr)\) for \(g \in \orth(M_{\mathfrak p}, q)\);
\item \(X\bigl(u, u a^{(\infty)}\bigr) = 1\);
\item \(X\bigl(ua, v^{(\infty)}\bigr) = X\bigl(u, v^{(\infty)} a\bigr)\) for \(a \in K_{\mathfrak p}^*\);
\item \(X\bigl(u, v a^{(\infty)}\bigr) = X\bigl(v, -u a^{(\infty)}\bigr)\) if \(u, v\) are members of orthogonal hyperbolic pairs;
\item \(X\bigl(e_i, e_j a^{(\infty)}\bigr) = x_{i, -j}\bigl(a^{(\infty)}\bigr)\) for \(i \neq \pm j\);
\item \(X\bigl(e_i, m^{(\infty)}\bigr) = x_{-i}\bigl(-m^{(\infty)}\bigr)\) for \(m^{(\infty)}\), where the formal parameter has the domain \(M_0^{(\infty, \mathfrak p)}\).
\end{enumerate}
\end{lemma}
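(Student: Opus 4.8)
The plan is to fix the base point $u = e_1$, define $X(e_1, -)$ by hand, and transport the definition along the group action. By Lemma~\ref{local-isotropy} the group $\orth(M_{\mathfrak m}, q)$ acts transitively on members of hyperbolic pairs, so every such $u$ equals $g e_1$ for some $g \in \orth(M_{\mathfrak m}, q)$, and condition~(1) forces
\[ X(u, v^{(\infty)}) = \up{g}{X\bigl(e_1, g^{-1} v^{(\infty)}\bigr)}. \]
This makes uniqueness automatic once $X(e_1, -)$ is fixed and reduces existence to (a) defining $X(e_1, -)$ and (b) checking the right-hand side is independent of $g$. Imitating the decomposition of an ESD-transvection stated before Lemma~\ref{esd-facts}, I would write a vector orthogonal to $e_1$ as $v^{(\infty)} = v_0^{(\infty)} + \sum_{1 \le |j| \le \ell} e_j v_j^{(\infty)}$ with $v_0^{(\infty)} \in M_0^{(\infty, \mathfrak m)}$ and $v_{-1}^{(\infty)} = 0$, and set
\[ X(e_1, v^{(\infty)}) = x_{-1}\bigl(-v_0^{(\infty)}\bigr) \prod_{j \ne \pm 1} x_{1, -j}\bigl(v_j^{(\infty)}\bigr). \]
The commutation relations of $\storth(M, q)$ make all factors pairwise commute, so this is a morphism of pro-groups, additive in $v^{(\infty)}$; conditions~(5),~(6) hold on inspection and the $e_1$-component is discarded, giving condition~(2) at $e_1$.

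The core of the argument is step (b), equivalently the equivariance of $X(e_1, -)$ under the stabilizer $P$ of $e_1$: one must show $X(e_1, h v^{(\infty)}) = \up{h}{X(e_1, v^{(\infty)})}$ for $h \in P$, and I would check this on generators of $P$. The unipotent part is painless: for $h = T(e_1, w)$ with $w \perp e_1$ one has $T(e_1, w)\, v^{(\infty)} = v^{(\infty)} + e_1 \langle w, v^{(\infty)} \rangle$, whose $e_1$-component is ignored, so the left side is $X(e_1, v^{(\infty)})$; and since $T(e_1, w) = \stmap(X(e_1, w))$ is elementary and the action of $\orth(M_{\mathfrak m}, q)$ extends that of $\storth(M_{\mathfrak m}, q)$, $\up{h}{(-)}$ is the action of the local Steinberg element $X(e_1, w)$, which the explicit conjugation formulas of Section~2 show to fix $X(e_1, v^{(\infty)})$ (all generators involved commute). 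Thus both sides agree.

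The genuine obstacle is equivariance under the Levi factor $\orth\bigl(M_0 \perp \langle e_{\pm 2}, \dots, e_{\pm \ell}\rangle\bigr)$ fixing $e_1, e_{-1}$, and in particular under its non-elementary isometries of the anisotropic part $M_0$. Here the left and right sides both lift the single orthogonal element $T(e_1, h v^{(\infty)})$ (using $h e_1 = e_1$ and the equivariance of genuine ESD-transvections in Lemma~\ref{esd-facts}), so their quotient lies in the kernel of $\stmap$, which the crossed-module results of \cite{UnitK2} show to be central; being also additive in $v^{(\infty)}$, this defect vanishes as soon as it vanishes for $v^{(\infty)}$ running over the $M_0$- and $e_j$-components and $h$ over a generating set of the Levi. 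For elementary $h$ this is again the conjugation formulas of Section~2; for the remaining isometries of $M_0$ it comes down to the identities $\up{h}{x_{-1}(m^{(\infty)})} = x_{-1}(h m^{(\infty)})$ and $\up{h}{x_{1,-j}(a^{(\infty)})} = x_{1,-j}(a^{(\infty)})$, which I expect to extract from the naturality of the $\orth(M_{\mathfrak m}, q)$-action furnished by \cite[theorem~2]{UnitK2}. Pinning down the action of these anisotropic isometries on the short-root subgroup is the step I expect to be hardest.

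Once $P$-equivariance is established, the transported formula no longer depends on the chosen $g$, so $X(u, -)$ is well-defined for every member $u$; condition~(1) for an arbitrary $g \in \orth(M_{\mathfrak m}, q)$ then follows because $\up{g}{X(u, v^{(\infty)})}$ and $X(gu, g v^{(\infty)})$ are computed from two isometries (a product involving $g$ and a direct choice) that both carry $e_1$ to $gu$, hence agree by well-definedness. The remaining conditions follow formally: condition~(2) reads $X\bigl(g e_1, (g e_1)\, a^{(\infty)}\bigr) = \up{g}{X(e_1, e_1 a^{(\infty)})} = 1$; for condition~(3) with $u = e_1$ I apply condition~(1) to the diagonal isometry $d$ with $d e_1 = e_1 a$, $d e_{-1} = e_{-1} a^{-1}$ and $a \in K_{\mathfrak m}^*$, using $\up{d}{x_{-1}(m)} = x_{-1}(m a)$ and $\up{d}{x_{1,-j}(b)} = x_{1,-j}(b a)$ to obtain $X(e_1 a, v^{(\infty)}) = X(e_1, v^{(\infty)} a)$, the general case following by transport; and condition~(4) reduces, via the transitivity on pairs of orthogonal members in Lemma~\ref{local-isotropy}, to $(u, v) = (e_1, e_2)$, where it becomes $x_{1,-2}(a^{(\infty)}) = x_{2,-1}(-a^{(\infty)})$, an instance of $x_{ij}(a) = x_{-j,-i}(-a)$. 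Uniqueness is then clear, since~(5),~(6),~(2) determine $X(e_1, -)$ on a generating set of $M_{e_1}^{(\infty, \mathfrak m)}$ and~(1) propagates it to all $u$.
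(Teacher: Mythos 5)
Your overall strategy coincides with the paper's: define \(X(e_1, v^{(\infty)})\) by an explicit product of generators (your sign on the \(M_0\)-component is the consistent one), transport it along the transitive action from Lemma \ref{local-isotropy}, and reduce well-definedness to equivariance under the stabilizer, checked on a generating set. Your treatment of the unipotent radical, of conditions (2)--(4), and of uniqueness matches the paper. But at the step you yourself flag as hardest there is a genuine gap, and it is not where you locate it. Your case analysis tacitly assumes that the Levi factor \(\orth\bigl(M_0 \perp \langle e_{\pm 2}, \dots, e_{\pm\ell}\rangle\bigr)\) is generated by its elementary transvections (plus diagonal elements) together with isometries of \(M_0\) extended by the identity. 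That is false in general. The correct generation statement, which the paper invokes as \cite[theorem 5]{OddPetrov}, is that this group is generated by its elementary transvections, its intersection with the diagonal subgroup, and the \emph{full} orthogonal group \(O = \orth\bigl(M_{0} \oplus e_{-2}K \oplus e_2 K, q\bigr)\) of the anisotropic part enlarged by one hyperbolic plane. Note that isometries of \(M_0\) extended trivially are actually unproblematic: they lie in \(\diag(M_{\mathfrak m}, q)\), whose action on generators is given by the formulas (Ad1)--(Ad6) of \cite{UnitK2}. The unavoidable hard case is \(h \in O\) mixing \(M_0\) with the hyperbolic plane: such an \(h\) is neither elementary nor diagonal, so neither the conjugation formulas of Section 2 nor (Ad1)--(Ad6) apply, and ``naturality'' of the \(\orth(M_{\mathfrak m}, q)\)-action from \cite[theorem 2]{UnitK2} gives no formula for \(\up{h}{x_{-1}(m^{(\infty)})}\) or \(\up{h}{x_{1,-j}(a^{(\infty)})}\); it only expresses compatibility with localization, it does not compute the action on generators. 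This case is precisely the one your proof cannot handle, and it cannot be argued away.

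The paper's resolution of exactly this point is a re-decomposition trick: by \cite[proposition 2]{UnitK2} one may eliminate the hyperbolic pair spanning the extra plane from the list and enlarge \(M_0\) to \(M_0 \oplus e_{-2}K \oplus e_2 K\); with respect to this coarser hyperbolic decomposition the elements of \(O\) \emph{are} diagonal, so the same root-formula argument applies, and the comparison morphism between the two Steinberg pro-groups transfers the equivariance back. Without this (or an equivalent) idea the key step does not go through. Two smaller remarks. First, your centrality detour is both shaky and unnecessary: shaky because \cite[theorem 3]{UnitK2} gives centrality of \(\Ker(\stmap)\) in the ordinary group \(\storth(M, q)\), not a centrality statement for a kernel at the level of the pro-groups \(\storth^{(\infty, \mathfrak m)}(M, q)\); unnecessary because \(M_{e_1}^{(\infty, \mathfrak m)}\) is a finite direct sum \(M_0^{(\infty, \mathfrak m)} \oplus \bigoplus_{j \neq -1} e_j K^{(\infty, \mathfrak m)}\), and two pro-group morphisms out of a finite direct sum that agree on each summand agree outright. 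Second, your construction gives (5) and (6) only for \(i = 1\); for the other indices they still require an argument (the paper conjugates by permutation matrices), which you omit.
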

\begin{proof}
Note that \(M_{e_\ell}^{(\infty, \mathfrak p)} = M_0^{(\infty, \mathfrak p)} \oplus \bigoplus_{i \neq -\ell} e_i K^{(\infty, \mathfrak p)}\). Let
\[X\bigl(e_\ell, v^{(\infty)}\bigr) = x_{-\ell}\bigl(v_0^{(\infty)}\bigr)\, \prod_{0 < i < \ell} \bigl(x_{\ell, -i}\bigl(v^{(\infty)}_i\bigr)\, x_{\ell i}\bigl(v^{(\infty)}_{-i}\bigr) \bigr) \colon M_{e_\ell}^{(\infty, \mathfrak p)} \to \storth^{(\infty, \mathfrak p)}(M, q),\]
where the variables \(v_0^{(\infty)}\) and \(v_i^{(\infty)}\) are the components of \(v^{(\infty)}\) in the decomposition of \(M_{e_\ell}^{(\infty, \mathfrak p)}\). This is the only possible choice satisfying the conditions \((2)\), \((5)\), and \((6)\).

The parabolic subgroup \(P = \{g \in \orth(M_{\mathfrak p}, q) \mid ge_\ell \in e_\ell K_{\mathfrak p}\}\) is clearly generated by the diagonal subgroup
\[\diag(M_{\mathfrak p}, q) = \{g \in \orth(M_{\mathfrak p}, q) \mid ge_i \in e_i K^*_{\mathfrak p} \text{ for all } 0 < |i| \leq \ell\},\]
the unipotent radical
\[U = \langle t_{\ell i}(K_{\mathfrak p}), t_{-\ell}(M_{0, \mathfrak p}) \mid 0 < |i| < \ell \rangle,\]
and the smaller orthogonal group
\[\textstyle\orth(M_{0, \mathfrak p} \oplus \bigoplus_{0 < |i| < \ell} e_i K_{\mathfrak p}, q)\]
trivially acting on \(e_{-\ell}\) and \(e_\ell\). By \cite[theorem 5]{OddPetrov}, this smaller orthogonal group is generated by its elementary orthogonal transvections, its intersection with the diagonal subgroup, and the orthogonal group
\[O = \orth(M_{0, \mathfrak p} \oplus e_{-1} K_{\mathfrak p} \oplus e_1 K_{\mathfrak p}, q)\]
trivially acting on \(e_i\) for \(|i| > 1\).

Now we show that
\[\textstyle \up g{X\bigl(e_\ell, v^{(\infty)}\bigr)} = X\bigl(e_\ell, g v^{(\infty)} \frac{ge_\ell}{e_\ell}\bigr)\]
for every such a generator \(g \in P\), where \(\frac{ge_\ell}{e_\ell}\) is the element of \(K_{\mathfrak p}^*\) such that \(g e_\ell = e_\ell \frac{ge_\ell}{e_\ell}\). Indeed, the diagonal subgroup acts by roots on both sides (see \cite[formulas (Ad1)--(Ad6)]{UnitK2}), for \(O\) this follows from the same argument using \cite[proposition 2]{UnitK2} or \cite[theorem 5]{Thesis} to eliminate the hyperbolic pair \((e_{-1}, e_1)\) and enlarge \(M_0\), and for the elementary transvections this may be checked directly by the definitions.

Let \(u \in M_{\mathfrak p}\) be a member of a hyperbolic pair. By lemma \ref{local-isotropy}, there is \(g \in \orth(M_{\mathfrak p}, q)\) such that \(u = ge_\ell\). Let \(X\bigl(u, v^{(\infty)}\bigr) = \up g{X\bigl(e_\ell, g^{-1} v^{(\infty)}\bigr)}\), this morphism is independent on \(g\) and satisfies \((1)\) and \((2)\). To prove \((3)\), note that there is \(h \in \diag(M_{\mathfrak p}, q)\) such that \(h e_\ell = e_\ell a\). Hence
\[\textstyle X\bigl(e_\ell a, v^{(\infty)}\bigr) = \up h{X\bigl(e_\ell, h^{-1} v^{(\infty)}\bigr)} = X\bigl(e_\ell, v^{(\infty)} \frac{he_\ell}{e_\ell}\bigr) = X\bigl(e_\ell, v^{(\infty)} a\bigr),\]
and \((3)\) for other vectors \(u\) follows from the definition of \(X\bigl(u, v^{(\infty)}\bigr)\).

The properties \((5)\) and \((6)\) follow by applying various permutation matrices from \(\orth(M_{\mathfrak p}, q)\) to \(x_{\ell, \ell - 1}\bigl(a^{(\infty)}\bigr)\) and \(x_\ell\bigl(m^{(\infty)}\bigr)\). Finally, in order to prove \((4)\) we again use lemma \ref{local-isotropy}. Without loss of generality, \(u = e_\ell\) and \(v = e_{\ell - 1}\). Then
\[X\bigl(e_\ell, e_{\ell - 1} a^{(\infty)}\bigr) = x_{\ell, 1 - \ell}\bigl(a^{(\infty)}\bigr) = x_{\ell - 1, -\ell}\bigl(-a^{(\infty)}\bigr) = X\bigl(e_{\ell - 1}, -e_\ell a^{(\infty)}\bigr).\qedhere\]
\end{proof}

In order to globalize such transvections, we need a lemma similar to \cite[lemma 27]{Thesis}.

\begin{lemma}\label{module-costalks}
Let \(S \subseteq K\) be a multiplicative subset, \(N\) be arbitrary \(K\)-module. Consider the family \(\pi_{\mathfrak p} \colon N^{(\infty, \mathfrak p)} \to N^{(\infty, S)}\) of canonical pro-group morphisms, where \(\mathfrak p\) runs over all prime ideals of \(K\) disjoint with \(S\). Then this family generates \(N^{(\infty, S)}\) in the following sense: for every pro-group \(G^{(\infty)} \in \Pro(\Group)\) the map
\[\Pro(\Group)\bigl(N^{(\infty, S)}, G^{(\infty)}\bigr) \to \prod_{\mathfrak p} \Pro(\Group)\bigl(N^{(\infty, \mathfrak p)}, G^{(\infty)}\bigr)\]
is injective.
\end{lemma}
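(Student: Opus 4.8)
The plan is to exploit the universal description of morphisms out of a formal projective limit, reduce to a single ordinary group in the target, and then turn the statement into one purely commutative-algebraic fact about an ideal.

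First I would reduce to the case where the target is a \emph{constant} pro-group, i.e.\ an ordinary group \(G\). Writing \(G^{(\infty)} = \varprojlim^{\Pro}_j G_j\), one has \(\Pro(\Group)(X, G^{(\infty)}) = \varprojlim_j \Pro(\Group)(X, G_j)\) for any pro-group \(X\), and arbitrary products commute with projective limits; since a projective limit of injective maps of sets is again injective, it suffices to prove that
\[\Pro(\Group)\bigl(N^{(\infty, S)}, G\bigr) \to \prod_{\mathfrak m} \Pro(\Group)\bigl(N^{(\infty, \mathfrak m)}, G\bigr)\]
is injective for every group \(G\).

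Next I would make the Hom-sets explicit. By the standard identity \(\Pro(\Group)(\varprojlim^{\Pro}_i X_i, G) = \varinjlim_i \Group(X_i, G)\) for a constant target, together with the fact that \(N^{(s)} = N\) as an abelian group, we get \(\Pro(\Group)(N^{(\infty, S)}, G) = \varinjlim_{s \in S} \Group(N, G)\), where \(S\) is directed by divisibility and the transition map attached to \(s \mapsto ss'\) is precomposition with multiplication by \(s'\). The same holds with \(S\) replaced by \(S_{\mathfrak m} = K \setminus \mathfrak m\), and \(\pi_{\mathfrak m}^*\) is the evident comparison induced by \(S \subseteq S_{\mathfrak m}\). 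Thus injectivity unwinds to: if \(f, f' \colon N \to G\) are homomorphisms such that for every maximal ideal \(\mathfrak m\) disjoint from \(S\) there is \(t_{\mathfrak m} \in K \setminus \mathfrak m\) with \(f(n t_{\mathfrak m}) = f'(n t_{\mathfrak m})\) for all \(n \in N\), then there is a single \(t \in S\) with \(f(nt) = f'(nt)\) for all \(n\). The right device is the set \(I = \{a \in K : f(na) = f'(na) \text{ for all } n \in N\}\); since the agreement locus \(\{n : f(n) = f'(n)\}\) is a subgroup of \(N\), the transporter \(I\) is readily seen to be an ideal of \(K\). The hypothesis says exactly that \(t_{\mathfrak m} \in I \setminus \mathfrak m\), i.e.\ \(I \not\subseteq \mathfrak m\) for every maximal ideal \(\mathfrak m\) disjoint from \(S\), and the desired conclusion is precisely \(I \cap S \neq \emptyset\).

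The main obstacle is this last, purely commutative-algebraic implication. I would argue by localization: \(I \cap S \neq \emptyset\) is equivalent to \(S^{-1} I = S^{-1} K\), an equality of ideals of \(S^{-1} K\) that may be tested after localizing at each maximal ideal \(\mathfrak n\) of \(S^{-1} K\), where conservativity of localization at maximal ideals applies. The delicate point — and the step I expect to require the most care — is to match the maximal ideals \(\mathfrak n\) of \(S^{-1} K\) with the maximal ideals \(\mathfrak m\) of \(K\) disjoint from \(S\) that the hypothesis actually supplies, so that \(I \not\subseteq \mathfrak m\) forces \((S^{-1}I)_{\mathfrak n} = (S^{-1}K)_{\mathfrak n}\) at every such \(\mathfrak n\); one must ensure the relevant primes disjoint from \(S\) are indeed governed by \emph{maximal} ideals of \(K\) rather than by arbitrary primes, and it is this bookkeeping, not the formal pro-group manipulations, that carries the real content of the lemma.
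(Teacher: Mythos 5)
Up to its final step your proposal coincides with the paper's own proof: the paper likewise reduces to a constant target \(G\), represents the two pro-morphisms by homomorphisms \(f, g \colon N^{(s)} \to G\) with a common index \(s \in S\), and forms exactly your transporter ideal, there called \(\mathfrak a = \{a \in K \mid f|_{N^{(s)} a} = g|_{N^{(s)} a}\}\). At the point where you stop and call the remaining commutative algebra ``delicate'', the paper simply asserts that since \(\mathfrak a\) is contained in no maximal ideal of \(K\) disjoint with \(S\), one has \(S^{-1} \mathfrak a = S^{-1} K\), i.e.\ \(\mathfrak a \cap S \neq \emptyset\). So your proposal is the paper's proof minus its last sentence.

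Your hesitation about that last sentence is justified, and the step cannot be closed by bookkeeping: the implication is false for general \(S\), precisely because maximal ideals of \(S^{-1}K\) contract to primes of \(K\) that are maximal among ideals \emph{disjoint with} \(S\), and these need not be maximal ideals of \(K\). Concretely, take \(K = \mathbb{Z}\) and \(S = \mathbb{Z} \setminus \{0\}\): no maximal ideal of \(\mathbb{Z}\) is disjoint with \(S\), so the hypothesis on \(\mathfrak a\) (your \(I\)) is vacuous, yet \(\mathfrak a = 0\) satisfies \(\mathfrak a \cap S = \emptyset\). The lemma itself fails in this case: the right-hand product is indexed by the empty set, hence is a one-point set, while \(\Pro(\Group)\bigl(\mathbb{Z}^{(\infty, S)}, \mathbb{Z}\bigr) = \varinjlim_{s \in S} \Group\bigl(\mathbb{Z}^{(s)}, \mathbb{Z}\bigr)\) has more than one element, since the transition maps (precomposition with multiplication by nonzero integers) are injective and therefore \([0] \neq [\id]\) in the colimit. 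The statement is only correct if \(\mathfrak m\) is taken to run over the primes maximal among ideals of \(K\) disjoint with \(S\), equivalently contractions of maximal ideals of \(S^{-1}K\); with that reading both your localization plan and the paper's one-line conclusion become valid, because an ideal disjoint with \(S\) is contained in such a prime by Zorn's lemma. This reinterpretation costs nothing in the cases \(S = \{1\}\) and \(S = K \setminus \mathfrak m\) that underlie theorem \ref{another-presentation}, where the two readings agree, but for general \(S\) lemma \ref{module-costalks} (and its use in the subsequent theorems) needs this correction, together with the observation that lemma \ref{esd-local} should then be invoked over \(K_{\mathfrak p}\) for such primes \(\mathfrak p\), its proof using only that this ring is local, granted the inputs quoted from \cite{UnitK2} apply there. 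In short: what you flagged as the delicate point is a genuine gap, one that the paper's own proof silently steps over.
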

\begin{proof}
If suffices to consider an ordinary group \(G\). Take group homomorphisms \(f, g \colon N^{(s)} \to G\) for some \(s \in S\) such that \(f \circ \pi_{\mathfrak p} = g \circ \pi_{\mathfrak p} \colon N^{(\infty, \mathfrak p)} \to G\) for all prime ideals \(\mathfrak p\) disjoint with \(S\). The set
\[\mathfrak a = \{a \in K \mid f|_{N^{(s)} a} = g|_{N^{(s)} a}\}\]
is an ideal of \(K\). By assumption it is not contained in any such \(\mathfrak p\), so there exists \(s' \in \mathfrak a \cap S\). This means that \(f|_{N^{(ss')}} = g|_{N^{(ss')}}\).
\end{proof}

\begin{theorem}\label{esd-identities}
Let \(K\) be a unital commutative ring, \(M\) be a finitely presented \(K\)-module with a quadratic form \(q\) and pairwise orthogonal hyperbolic pairs \((e_{-1}, e_1), \ldots, (e_{-\ell}, e_\ell)\) for \(\ell \geq 3\). Let also \(S \subseteq K\) be a multiplicative subset and \(u \in S^{-1} M\) be a member of a hyperbolic pair. Then there exists at most one morphism \(X\bigl(u, v^{(\infty)}\bigr) \colon M^{(\infty, S)}_u \to \storth^{(\infty, S)}(M, q)\) of pro-groups such that for any prime ideal \(\mathfrak p\) of \(K\) disjoint with \(S\) the following diagram with canonical vertical arrows is commutative:
\[\xymatrix@R=30pt@C=120pt@!0{
M_u^{(\infty, \mathfrak p)} \ar[r]^(0.4){X(u, v^{(\infty)})} \ar[d] & \storth^{(\infty, \mathfrak p)}(M, q) \ar[d]\\
M_u^{(\infty, S)} \ar[r]^(0.4){X(u, v^{(\infty)})} & \storth^{(\infty, S)}(M, q).
}\]
These morphisms also satisfy the following identities:
\begin{enumerate}
\item \(\up g{X\bigl(u, v^{(\infty)}\bigr)} = X\bigl(gu, gv^{(\infty)}\bigr)\) for \(g \in \storth(S^{-1} M, q)\) if the left hand side is defined;
\item \(X\bigl(u, u a^{(\infty)}\bigr) = 1\) if the left hand side is defined;
\item \(X\bigl(ua, v^{(\infty)}\bigr) = X\bigl(u, v^{(\infty)} a\bigr)\) for \(a \in S^{-1} K^*\) if both sides are defined;
\item \(X\bigl(u, va^{(\infty)}\bigr) = X\bigl(v, -ua^{(\infty)}\bigr)\) if \(u, v \in S^{-1} M\) are members of orthogonal hyperbolic pairs and both sides are defined;
\item \(X\bigl(e_i, e_j a^{(\infty)}\bigr) = x_{i, -j}\bigl(a^{(\infty)}\bigr)\) for \(i \neq \pm j\);
\item \(X\bigl(e_i, m^{(\infty)}\bigr) = x_{-i}\bigl(-m^{(\infty)}\bigr)\).
\end{enumerate}
\end{theorem}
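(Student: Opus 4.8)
The plan is to reduce every claim to the local case of Lemma~\ref{esd-local} and to glue along maximal ideals using Lemma~\ref{module-costalks}. The structural observation that makes this work is that \(M_u^{(\infty, S)}\), being the kernel of the split epimorphism \(\langle u, v^{(\infty)} \rangle \colon M^{(\infty, S)} \to K^{(\infty, S)}\), is a direct summand of \(M^{(\infty, S)}\), and that this decomposition is compatible with the canonical maps \(\pi_{\mathfrak m}\): writing \(r \colon M^{(\infty, S)} \twoheadrightarrow M_u^{(\infty, S)}\) for the retraction, one has \(r \circ \pi_{\mathfrak m} = \pi_{\mathfrak m} \circ r_{\mathfrak m}\), where the two occurrences of \(\pi_{\mathfrak m}\) are the canonical maps for \(M\) and for \(M_u\) respectively.

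For uniqueness I would argue as follows. Lemma~\ref{module-costalks} applied to the honest \(K\)-module \(M\) says that the family \(\pi_{\mathfrak m} \colon M^{(\infty, \mathfrak m)} \to M^{(\infty, S)}\) generates \(M^{(\infty, S)}\). If \(f_1, f_2 \colon M_u^{(\infty, S)} \to \storth^{(\infty, S)}(M, q)\) agree after every \(\pi_{\mathfrak m}\), then \(f_1 \circ r\) and \(f_2 \circ r\) agree after every \(\pi_{\mathfrak m}\) of \(M\) by the compatibility above, hence \(f_1 \circ r = f_2 \circ r\) by the lemma, and precomposing with the inclusion gives \(f_1 = f_2\). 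Since the defining commutative square forces any admissible morphism to restrict, along \(\pi_{\mathfrak m}\), to the local morphism supplied by Lemma~\ref{esd-local}, this proves the ``at most one'' assertion.

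The identities whose domain is \(K^{(\infty, S)}\) or \(M_0^{(\infty, S)}\) --- that is \((2)\), \((4)\), \((5)\), \((6)\) --- I would verify by applying Lemma~\ref{module-costalks} directly to the \(K\)-modules \(K\) and \(M_0\): both sides are morphisms out of such a pro-module, so equality can be tested after each \(\pi_{\mathfrak m}\), where it is precisely the corresponding clause of Lemma~\ref{esd-local}. Here one only needs that the localization \(S^{-1} K \to K_{\mathfrak m}\) carries units to units and orthogonal hyperbolic pairs to orthogonal hyperbolic pairs, so that the hypotheses of the local identities persist. Identity \((3)\) is the same, now invoking the generation property of \(M_u^{(\infty, S)}\) established for the uniqueness step.

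The main obstacle is identity \((1)\), since its acting element \(g\) lives in \(\storth(S^{-1} M, q)\) rather than in the orthogonal group \(\orth(M_{\mathfrak m}, q)\) appearing in Lemma~\ref{esd-local}\((1)\). I plan to push \(g\) forward along \(S^{-1} K \to K_{\mathfrak m}\) into \(\storth(M_{\mathfrak m}, q)\) and then along \(\stmap\) to an element \(\bar g \in \orth(M_{\mathfrak m}, q)\), and to use the extranaturality of the action recorded at the end of Section~2 --- the commutative squares for the inclusion of multiplicative subsets and for the continuation of the action to the orthogonal group --- to identify \(\pi_{\mathfrak m} \circ \up{g}{(-)}\) with \(\up{\bar g}{(-)} \circ \pi_{\mathfrak m}\). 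After this matching, Lemma~\ref{esd-local}\((1)\) gives the equality of both sides of \((1)\) after each \(\pi_{\mathfrak m}\), and generation for \(M_u^{(\infty, S)}\) finishes the proof. The delicate point is the bookkeeping: one must check that \(gu\) and \(gv^{(\infty)}\) localize to \(\bar g u\) and \(\bar g v^{(\infty)}\) compatibly, so that the right-hand side of \((1)\) becomes exactly the local morphism \(X(\bar g u, \bar g v^{(\infty)})\) to which Lemma~\ref{esd-local}\((1)\) refers.
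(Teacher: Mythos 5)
Your proposal is correct and takes essentially the same approach as the paper's own (much terser) proof: uniqueness by applying Lemma~\ref{module-costalks} to \(M\) together with the observation that \(M_u^{(\infty, S)}\) is a direct summand of \(M^{(\infty, S)}\) compatibly with the maps \(\pi_{\mathfrak m}\), and the identities by testing after each \(\pi_{\mathfrak m}\) against the corresponding clause of Lemma~\ref{esd-local}, with extranaturality of the action (composed with the fact that the \(\orth(M_{\mathfrak m}, q)\)-action continues the \(\storth(M_{\mathfrak m}, q)\)-action) handling identity (1). The only point the paper states explicitly that you leave implicit is the global existence of \(X(e_i, v^{(\infty)})\) via the explicit product formula of Lemma~\ref{esd-local}, which keeps (5) and (6) from being vacuous and is what Theorem~\ref{esd-existence}(2) later relies on.
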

\begin{proof}
By lemma \ref{module-costalks} applied to \(M\) the pro-group \(M_u^{(\infty, S)}\) is generated by the canonical morphisms \(M_u^{(\infty, \mathfrak p)} \to M_u^{(\infty, S)}\) for all prime ideals \(\mathfrak p\) of \(K\) disjoint with \(S\). Hence the morphisms \(X\bigl(u, v^{(\infty)}\bigr)\) are unique whenever they exist. Clearly, such a morphism exists and satisfies \((5)\), \((6)\) for \(u = e_i\). The set of members of hyperbolic pairs \(u \in S^{-1} M\) such that \(X\bigl(u, v^{(\infty)}\bigr)\) exists is closed under the action of \(\storth(S^{-1} M, q)\) by extranaturality of the action. All other properties follow from lemmas \ref{esd-local} and \ref{module-costalks}.
\end{proof}

\begin{theorem}\label{esd-existence}
Let \(K\) be a unital commutative ring, \(M\) be a finitely presented \(K\)-module with a quadratic form \(q\) and pairwise orthogonal hyperbolic pairs \((e_{-1}, e_1), \ldots, (e_{-\ell}, e_\ell)\) for \(\ell \geq 3\). Let also \(S \subseteq K\) be a multiplicative subset, \(u \in S^{-1} M\) be a member of a hyperbolic pair. The morphism \(X\bigl(u, v^{(\infty)}\bigr) \colon M_u^{(\infty, S)} \to \storth^{(\infty, S)}(M, q)\) from theorem \ref{esd-identities} exists in the following cases:
\begin{enumerate}
\item \(S = K \setminus \mathfrak p\) for a prime ideal \(\mathfrak p\);
\item \(u\) lies in the orbit of \(e_1\) under the action of \(\eorth(S^{-1} M, q)\);
\item \(S = \{1\}\) and \(u\) lies in the orbit of \(e_1\) under the action of \(\orth(M, q)\).
\end{enumerate}
In the last case \(X(u, v) \in \storth(M, q)\) maps to \(T(u, v) \in \orth(M, q)\).
\end{theorem}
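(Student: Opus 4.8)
The argument splits into the three listed cases, in increasing order of difficulty, and in each the existence of \(X(u, v^{(\infty)})\) is reduced to the local statement of Lemma~\ref{esd-local} together with the uniqueness already supplied by Lemma~\ref{module-costalks} and Theorem~\ref{esd-identities}. The guiding principle is that, to construct the required global morphism \(X(u, v^{(\infty)})\colon M_u^{(\infty, S)} \to \storth^{(\infty, S)}(M, q)\), it is enough to produce \emph{some} morphism out of \(M_u^{(\infty, S)}\) whose composite with each canonical projection \(M_u^{(\infty, \mathfrak m)} \to M_u^{(\infty, S)}\) agrees, after projecting to \(\storth^{(\infty, S)}(M, q)\), with the local ESD-transvection of Lemma~\ref{esd-local}; Lemma~\ref{module-costalks} then identifies this morphism as the unique \(X(u, v^{(\infty)})\) demanded by Theorem~\ref{esd-identities}.

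For case~(1) the subset \(S = K \setminus \mathfrak m\) is disjoint from exactly one maximal ideal, namely \(\mathfrak m\) itself, and \(\storth^{(\infty, S)}(M, q) = \storth^{(\infty, \mathfrak m)}(M, q)\) with \(S^{-1} M = M_{\mathfrak m}\). Hence the morphism supplied by Lemma~\ref{esd-local} is already the global one and the defining diagram is trivially commutative, which settles existence for \emph{every} member \(u\) of a hyperbolic pair. For case~(2) the plan is to start from \(u = e_1\), where \(X(e_1, v^{(\infty)})\) exists by properties~(5) and~(6), and then to invoke the closure of the existence set under the \(\storth(S^{-1} M, q)\)-action that was recorded in the proof of Theorem~\ref{esd-identities}. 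Since \(\eorth(S^{-1} M, q)\) is exactly the image of \(\storth(S^{-1} M, q)\) in \(\orth(S^{-1} M, q)\), the \(\storth\)-orbit of \(e_1\) coincides with its \(\eorth(S^{-1} M, q)\)-orbit, so every \(u\) permitted in~(2) is reached.

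Case~(3) carries the real content: with \(S = \{1\}\) the closure from Theorem~\ref{esd-identities} only yields the orbit under \(\eorth(M, q)\), the image of \(\storth(M, q)\), whereas~(3) asks for the full \(\orth(M, q)\)-orbit of \(e_1\). Here the plan is to exploit the crossed-module action of \(\orth(M, q)\) on \(\storth(M, q)\) from \cite[theorem~3]{UnitK2} and, crucially, its extranaturality: for \(g \in \orth(M, q)\) the automorphism \(\up g{(-)}\) commutes with every canonical projection \(\storth^{(\infty, \mathfrak m)}(M, q) \to \storth(M, q)\), and likewise on modules. Assuming \(X(u, v^{(\infty)})\) exists, I would define \(Y\) on \(M_{gu}\) by \(w \mapsto \up g{X(u, g^{-1} w)}\) and verify that it satisfies the defining diagram for \(gu\): at each \(\mathfrak m\) the local identity \(X(gu, \cdot) = \up g{X(u, g^{-1}\,\cdot)}\) of Lemma~\ref{esd-local}(1) combines with extranaturality, used to slide \(\up g{(-)}\) past the projections, and with the assumed diagram for \(u\) to give exactly the defining diagram of Theorem~\ref{esd-identities} for the pair \((gu, Y)\). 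The uniqueness in Theorem~\ref{esd-identities} then forces \(Y = X(gu, v^{(\infty)})\); this both proves closure under \(\orth(M, q)\), hence existence on the whole orbit, and, read with \(w = gv\), yields the asserted equivariance \(\up g{X(u, v)} = X(gu, gv)\). I expect this diagram chase --- making the larger \(\orth(M, q)\)-action, rather than merely the \(\storth\)-action, thread correctly through the stalk maps --- to be the main obstacle.

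It remains to identify the image under \(\stmap\). First I would treat \(u = e_i\) directly from~(5) and~(6), using \(\stmap(x_{i, -j}(a)) = t_{i, -j}(a) = T(e_i, e_j a)\) and \(\stmap(x_{-i}(-m)) = t_{-i}(-m) = T(e_i, m)\), which assemble, via the product decomposition of \(T(e_i, \cdot)\) recorded before the definition of \(\storth(M, q)\), into \(\stmap(X(e_i, v)) = T(e_i, v)\). For a general \(u = g e_1\) I would use that \(\stmap\) is \(\orth(M, q)\)-equivariant, being a crossed module, so that
\[\stmap\bigl(\up g{X(e_1, g^{-1} v)}\bigr) = \up g{\stmap\bigl(X(e_1, g^{-1} v)\bigr)} = \up g{T(e_1, g^{-1} v)} = T(g e_1, v) = T(u, v),\]
where the third equality is the conjugation rule \(\up g{T(u, v)} = T(gu, gv)\) of Lemma~\ref{esd-facts}.
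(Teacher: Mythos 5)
Your proposal is correct and takes essentially the same route as the paper: case (1) is exactly Lemma \ref{esd-local}, case (2) is the closure of the existence set under the \(\storth(S^{-1}M,q)\)-action established in Theorem \ref{esd-identities}, and case (3) defines \(X(u,v)=\up g{X(e_1,g^{-1}v)}\) and checks the defining diagram via extranaturality of the \(\orth(M,q)\)-action, with equivariance and the value of \(\stmap\) then following from uniqueness. Your write-up simply makes explicit the diagram chase and the computation \(\stmap(X(e_i,v))=T(e_i,v)\) that the paper compresses into ``the remaining claims follow from the uniqueness of \(X(u,v)\) and the definitions.''
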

\begin{proof}
The first case is lemma \ref{esd-local}, the second is theorem \ref{esd-identities}. If \(S = \{1\}\) and \(u = ge_1\) for some \(g \in \orth(M, q)\), then let \(X(u, v) = \up g{X(e_1, g^{-1} v)}\) for all vectors \(v \perp u\). This morphism satisfies the definition from theorem \ref{esd-identities} by extranaturality of the action. The remaining claim follows from the uniqueness of \(X(u, v)\) and the definitions.
\end{proof}

Finally, we prove that the orthogonal Steinberg group admits ``another presentation'' in the sense of van der Kallen. Note that the third property of \(X(u, v)\) from theorem \ref{esd-identities} is not needed.

\begin{theorem}\label{another-presentation}
Let \(K\) be a unital commutative ring, \(M\) be a finitely presented \(K\)-module with a quadratic form \(q\) and pairwise orthogonal hyperbolic pairs \((e_{-1}, e_1), \ldots, (e_{-\ell}, e_\ell)\) for \(\ell \geq 3\). Let \(\storth^*(M, q)\) be the abstract group generated by elements \(X^*(u, v)\) for vectors \(u, v \in M\) such that \(u \in \orth(M, q)\, e_1\) and \(v \perp u\). The relations are the following:
\begin{itemize}
\item \(X^*(u, v + v') = X^*(u, v)\, X^*(u, v')\);
\item \(X^*(u, v)\, X^*(u', v')\, X^*(u, v)^{-1} = X^*(T(u, v)\, u', T(u, v)\, v')\);
\item \(X^*(u, va) = X^*(v, -ua)\) if \((u, v)\) lies in the orbit of \((e_1, e_2)\) under the action of \(\orth(M, q)\);
\item \(X^*(u, ua) = 1\).
\end{itemize}
Then the canonical morphism \(\storth(M, q) \to \storth^*(M, q), x_{ij}(a) \mapsto X^*(e_i, e_{-j} a), x_j(m) \mapsto X^*(e_{-j}, -m)\) is an isomorphism, the preimage of \(X^*(u, v)\) is the element \(X(u, v)\) from theorem \ref{esd-existence}. Also we may use \(\eorth(M, q)\) instead of \(\orth(M, q)\) in the statement.
\end{theorem}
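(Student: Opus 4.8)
The plan is to construct the canonical homomorphism $\Phi\colon\storth(M,q)\to\storth^*(M,q)$ determined by $x_{ij}(a)\mapsto X^*(e_i,e_{-j}a)$ and $x_j(m)\mapsto X^*(e_{-j},-m)$, together with the map $\Psi\colon\storth^*(M,q)\to\storth(M,q)$, $X^*(u,v)\mapsto X(u,v)$, where $X(u,v)$ is the element supplied by theorem \ref{esd-existence} with $S=\{1\}$. First I would check that $\Psi$ is well defined, i.e.\ that the four defining relations of $\storth^*$ hold among the $X(u,v)$: additivity is the assertion that $X(u,-)$ is a morphism of pro-groups and hence a homomorphism on $M_u$; the triviality relation $X(u,ua)=1$ is identity $(2)$ of theorem \ref{esd-identities}; the symmetry relation is identity $(4)$; and the conjugation relation is identity $(1)$ applied to $g=X(u,v)$, using that $\stmap(X(u,v))=T(u,v)$ so that $g$ acts on vectors through $T(u,v)$. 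By identities $(5)$ and $(6)$ we have $X(e_i,e_{-j}a)=x_{ij}(a)$ and $X(e_{-j},-m)=x_j(m)$, so $\Psi\circ\Phi$ will be the identity on the generators of $\storth(M,q)$ as soon as $\Phi$ is shown to exist.

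The technical engine is an action of $\orth(M,q)$ (equivalently $\eorth(M,q)$) on $\storth^*(M,q)$ with $\up{g}{X^*(u,v)}=X^*(gu,gv)$. I would define $\alpha_g$ by this formula on generators and verify that it preserves the four relations of $\storth^*$; the only nonformal point is the conjugation relation, which requires $T(gu,gv)=\up{g}{T(u,v)}$, i.e.\ lemma \ref{esd-facts}. Since $\alpha_g\alpha_{g^{-1}}=\id$ this gives a homomorphism into $\Aut(\storth^*)$, and the conjugation relation says precisely that $\alpha_{T(u,v)}$ is realised by conjugation by $X^*(u,v)$; hence $\alpha_g$ is inner for every $g\in\eorth(M,q)$. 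Writing $u=ge_1$ and $X^*(u,v)=\up{g}{X^*(e_1,g^{-1}v)}$ with $g$ a product of elementary transvections, and expanding $X^*(e_1,g^{-1}v)$ by additivity (the $e_1$-component dropping out by triviality), one finds that $\storth^*(M,q)$ is generated by the standard symbols $X^*(e_i,e_{-j}a)$ and $X^*(e_{-j},-m)$, so that $\Phi$, once it is defined, is surjective.

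It remains to verify that $\Phi$ respects the nine Steinberg relations. The relations expressing additivity of the generators and $x_{ij}(a)=x_{-j,-i}(-a)$ are immediate from the additivity and symmetry relations of $\storth^*$. Every commutator relation is then treated by a single mechanism: the conjugation relation rewrites $\up{\Phi(y)}{\Phi(y')}$ as $X^*(w)$, where $w$ is obtained by applying the ESD-transvection $\stmap(y)$ to the label of $y'$; I would flip this symbol by symmetry so that its first argument becomes a fixed hyperbolic vector, split the second argument by additivity (the resulting symbols share a first argument, so they commute and may be reordered at will, and any term of the shape $X^*(\,\cdot\,,(\text{first argument})\,c)$ disappears by triviality), and finally flip each surviving symbol back by symmetry to recover the prescribed product of standard generators. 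For this to be legitimate one must check at each step that the pair in question lies in the orbit of $(e_1,e_2)$; this holds because every vector produced, such as $e_j+e_ia=t_{ij}(a)\,e_j$ or $e_i+m-e_{-i}q(m)=t_i(m)\,e_i$, is an elementary-transvection translate of a basis vector, and elementary transvections preserve orthogonality and hyperbolicity of pairs.

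The most elaborate case, which I expect to be the main obstacle, is the mixed relation $[x_i(m),x_{ij}(a)]=x_{-i,j}(-q(m)a)\,x_j(ma)$: here the conjugation step turns the short-root transvection $\stmap(x_i(m))=T(e_{-i},-m)$ loose on $e_i$, producing the vector $e_i+m-e_{-i}q(m)$ with its genuine quadratic term, and the computation must reproduce \emph{both} right-hand factors in the correct order; the freedom to permute symbols sharing a first argument (coming from additivity) is exactly what makes the prescribed ordering attainable without any auxiliary commutator identity. Once all nine relations are confirmed, $\Phi$ is a well-defined surjection with $\Psi\circ\Phi=\id$, hence $\Phi$ is bijective and $\Psi=\Phi^{-1}$; in particular the preimage of $X^*(u,v)$ is $X(u,v)$. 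Finally, since the whole argument uses only elementary transvections and their translates, $\eorth(M,q)$ may replace $\orth(M,q)$; conversely the extra generators indexed by the larger orbit $\orth(M,q)\,e_1$ are pinned down through the $\orth$-equivariance of $\Psi$ and the crossed-module action on $\storth(M,q)$.
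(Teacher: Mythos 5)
Your construction of \(\Psi\), your verification that the nine Steinberg relations hold among the symbols \(X^*(e_i,e_{-j}a)\), \(X^*(e_{-j},-m)\) (a point the paper's proof leaves implicit in the phrase ``the homomorphism from the statement''), and the identity \(\Psi\circ\Phi=\id\) are all correct and match the paper where they overlap. The gap is in your surjectivity step. Conjugation inside \(\storth^*(M,q)\) realises only the automorphisms \(\alpha_{T(u,v)}\), and the transvections \(T(u,v)\) occurring in the presentation generate exactly \(\eorth(M,q)\) (each \(T(ge_1,gv)=g\,T(e_1,v)\,g^{-1}\) lies in \(\eorth(M,q)\) because \(T(e_1,v)\) is a product of elementary transvections and \(\eorth(M,q)\trianglelefteq\orth(M,q)\)). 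So your mechanism ``\(\alpha_g\) is inner, hence \(X^*(ge_1,v)\) is a conjugate of a word in standard symbols by a word in standard symbols'' reaches precisely the generators \(X^*(u,v)\) with \(u\in\eorth(M,q)\,e_1\). The theorem indexes generators by the possibly strictly larger orbit \(\orth(M,q)\,e_1\) --- no stability hypothesis is made, and over a general commutative ring \(\eorth\)-orbits of isotropic unimodular vectors are proper subsets of \(\orth\)-orbits --- and for \(g\in\orth(M,q)\setminus\eorth(M,q)\) the automorphism \(\alpha_g\) has no reason to be inner. Your closing remark that the extra generators are ``pinned down through the \(\orth\)-equivariance of \(\Psi\) and the crossed-module action'' is circular: knowing \(\Psi(X^*(u,v))=X(u,v)\) together with \(\Psi\circ\Phi=\id\) places \(X^*(u,v)\) in the image of \(\Phi\) only if one already knows \(\Phi\circ\Psi=\id\), which is exactly the surjectivity being proved. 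As written, your argument proves the \(\eorth\)-version of the theorem but not the main \(\orth\)-version.

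The paper avoids this hole by never proving generation by standard symbols directly. It notes that \(\storth^*(M,q)\) is generated by the subgroups \(\alpha_g(\mathrm{im}\,\Phi)\) for \(g\in\orth(M,q)\) --- here the action is used only as an outer action, so innerness is irrelevant --- and each such subgroup is perfect because \(\storth(M,q)\) is; hence \(\storth^*(M,q)\) is perfect. Next, the homomorphism \(\storth^*(M,q)\to\orth(M,q)\), \(X^*(u,v)\mapsto T(u,v)\), has central kernel by the conjugation relation, and since it factors as \(\stmap\circ\Psi\), the kernel of \(\Psi\) is central. Finally, a split epimorphism with central kernel and perfect source is an isomorphism: \(\storth^*=\ker\Psi\cdot\mathrm{im}\,\Phi\) with \(\ker\Psi\) central forces \(\storth^*=[\storth^*,\storth^*]\subseteq\mathrm{im}\,\Phi\). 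Surjectivity of \(\Phi\) thus comes out as a consequence rather than being an input. To repair your proof, keep your verification of the relations and of \(\Psi\circ\Phi=\id\) (which the paper could in fact cite you for), but replace the direct generation argument by this perfectness-plus-central-kernel argument.
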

\begin{proof}
Let \(F \colon \storth(M, q) \to \storth^*(M, q)\) be the homomorphism from the statement. By theorems \ref{esd-identities} and \ref{esd-existence}, there is a homomorphism \(G \colon \storth^*(M, q) \to \storth(M, q), X^*(u, v) \mapsto X(u, v)\). Clearly, \(G \circ F = \id\).

The group \(\orth(M, q)\) (or \(\eorth(M, q)\)) acts on \(\storth^*(M, q)\) by \(\up g{X^*(u, v)} = X^*(gu, gv)\). Clearly, \(\storth^*(M, q)\) is generated by the image of \(F\) and its conjugates under this action (here we need that \(u\) lies in the orbit of \(e_1\)). Since \(\storth(M, q)\) is perfect, it follows that \(\storth^*(M, q)\) is perfect. The canonical homomorphism \(\storth^*(M, q) \to \orth(M, q), X^*(u, v) \mapsto T(u, v)\) has central kernel by the second relation, hence the kernel of \(G\) is also central.

Now \(G \colon \storth^*(M, q) \to \storth(M, q)\) is a split perfect central extension. It is well-known that such a homomorphism is necessarily an isomorphism, its splitting \(F\) is the inverse.
\end{proof}

\bibliographystyle{plain}  
\bibliography{references}

\begin{thebibliography}{10}

\bibitem{AnotherPresentationOrth}
S.~B{\"{o}}ge.
\newblock Steinberggruppen von orthogonalen gruppen.
\newblock {\em J. Reine Angew. Math.}, 494:219--236, 1998.

\bibitem{CentralityC}
A.~Lavrenov.
\newblock Another presentation for symplectic {S}teinberg groups.
\newblock {\em J. Pure Appl. Alg.}, 219(9):3755--3780, 2015.

\bibitem{CentralityD}
A.~Lavrenov and S.~Sinchuk.
\newblock On centrality of even orthogonal $\mathrm{K}_2$.
\newblock {\em J. Pure Appl. Alg.}, 221(5):1134--1145, 2017.

\bibitem{ChevK2}
A.~Lavrenov, S.~Sinchuk, and E.~Voronetsky.
\newblock Centrality of $\mathrm{K}_2$ for {C}hevalley groups: a pro-group
  approach, 2020.
\newblock Preprint, arXiv:2009.03999.

\bibitem{ASR}
B.~A. Magurn, W.~van~der Kallen, and L.~N. Vaserstein.
\newblock Absolute stable rank and {W}itt cancellation for noncommutative
  rings.
\newblock {\em Invent. Math.}, 91:525--542, 1988.

\bibitem{OddPetrov}
V.~Petrov.
\newblock Odd unitary groups.
\newblock {\em J. Math. Sci.}, 130(3):4752--4766, 2005.

\bibitem{CentralityE}
S.~Sinchuk.
\newblock On centrality of $\mathrm{K}_2$ for {C}hevalley groups of type
  $\mathrm{E}_l$.
\newblock {\em J. Pure Appl. Alg.}, 220(2):857--875, 2016.

\bibitem{Tulenbaev}
M.S. Tulenbaev.
\newblock Schur multiplier of the group of elementary matrices of finite order.
\newblock {\em J. Sov. Math.}, 17(4):2062--2067, 1981.

\bibitem{AnotherPresentation}
W.~van~der Kallen.
\newblock Another presentation for {S}teinberg groups.
\newblock {\em Indag. Math.}, 80(4):304--312, 1977.

\bibitem{LinK2}
E.~Voronetsky.
\newblock Centrality of $\mathrm{K}_2$-functor revisited.
\newblock {\em J. Pure Appl. Alg.}, 225(4):106547, 2020.

\bibitem{UnitK2}
E.~Voronetsky.
\newblock Centrality of odd unitary $\mathrm{K}_2$-functor, 2020.
\newblock Preprint, arXiv:2005.02926.

\bibitem{StabVor}
E.~Voronetsky.
\newblock Injective stability for odd unitary $\mathrm{K}_1$.
\newblock {\em J. Group Theory}, 23(5), 2020.

\bibitem{Thesis}
E.~Voronetsky.
\newblock {\em Lower $\mathrm{K}$-theory of odd unitary groups}.
\newblock PhD thesis, St Petersburg University, Saint Petersburg, Russia, 2022.

\bibitem{Wendt}
M.~Wendt.
\newblock On homotopy invariance for homology of rank two groups.
\newblock {\em J. Pure Appl. Alg.}, 216(10):2291--2301, 2012.

\end{thebibliography}

\end{document}